\theoremstyle{plain}
\newtheorem{thm}{Theorem}[section]
\newtheorem{lem}[thm]{Lemma}
\newtheorem{prop}[thm]{Proposition}
\newtheorem{cor}[thm]{Corollary}
\theoremstyle{definition}
\newtheorem{de}[thm]{Definition}
\theoremstyle{remark}
\newtheorem{rem}[thm]{Remark}
\def \N {\mathbb N}
\def \id {{\rm id}}
\def \ep {\epsilon}
\DeclareMathOperator{\diam}{diam}
\begin{document}
	\title{On Feldman-Katok metric and entropy formulas}

\author{Fangzhou Cai}

\author{Jie Li$^*$}\let\thefootnote\relax\footnote{* Corresponding author. Research of Jie Li is supported by NNSF of China (Grant No. 12031019)}

\address[F. Cai]{School of Mathematics and Systems Science, Guangdong Polytechnic Normal University, Guangzhou, 510665, PR China}
\email{cfz@mail.ustc.edu.cn}
\address[J. Li]{School of Mathematics and Statistics, Jiangsu Normal University, Xuzhou, Jiangsu 221116, China}
\email{jiel0516@mail.ustc.edu.cn}

\subjclass[2010]{37A35, 37A05, 54H20}
\keywords{Feldman-Katok metric, Entropy formulas, measure FK-equicontinuity.}
	\maketitle
	
	\begin{abstract}
		In this paper, we study the  Feldman-Katok metric and  give corresponding entropy formulas by replacing  Bowen metric with  Feldman-Katok metric. It turns out that the Feldman-Katok metric is  the weakest one, in the sense of allowing  desynchrony and jump  in the process of measuring orbit segments' distance,  that makes the entropy formulas valid.
Some related topics  are also discussed.
	\end{abstract}

	\section{Introduction}
	In ergodic theory a fundamental problem is to classify measure-preserving systems (MPS for short) up to isomorphism. In 1958, Kolmogorov \cite{K58} introduced the concept of entropy into ergodic theory and proved that entropy is an isomorphism invariant for MPSs. By calculating the entropies, Kolmogorov proved that the two-sided $(\frac{1}{2},\frac{1}{2})$-shift and the two-sided $(\frac{1}{3},\frac{1}{3},\frac{1}{3})$-shift are not isomorphic, since they had entropies $\log 2$ and $\log 3$ respectively.
	
	A remarkable
	achievement on the isomorphism problem is Ornstein’s theory \cite{orn}. He  proved that any two Bernoulli processes of equal entropy are isomorphic.
	In Ornstein’s theory the concept of
	 {\it finitely determined process} played an important role, the definition of which is based on the  Hamming distance $\bar{d_n}:$
	 $$\bar{d_n}(x_0x_1\ldots x_{n-1},y_0y_1\ldots y_{n-1})=\frac{|\{0\leq i\leq n-1:x_i\neq y_i\}|}{n},$$
	 where $x_0x_1\ldots x_{n-1}$ and $y_0y_1\ldots y_{n-1}$ are two finite sequences.
	
	In 1943, Kakutani introduced a notion of equivalence called {\it Kakutani equivalence} between ergodic MPSs\cite{kaku}.
	In 1976,  Feldman \cite{feld} introduced the notion of {\it loose Bernoullicity} and brought a new idea into the classification of MPSs. Then a theory which ran parallel to the theory of Ornstein’s was established \cite{katok,feld,orw}. It classified ergodic MPSs which are Kakutani equivalent to
	Bernoulli or Kronecker systems.
 In \cite{feld}, Feldman replaced $\bar{d}_n$ in the definition
	 of finitely determined process by the  {\it edit distance} $\bar{f}_n$:
	  $$\bar{f_n}(x_0x_1\ldots x_{n-1},y_0y_1\ldots y_{n-1})=1-\frac{k}{n},$$
	  where $k$ is the largest integer such that there exist $$0\leq i_1<\ldots<i_k\leq n-1, 0\leq j_1<\ldots<j_k\leq n-1$$ and $x_{i_s}=y_{j_s}$ for $s=1,\ldots,k.$
	  In this way, he defined
	 {\it finitely fixed process}, equivalently loosely Bernoulli system. Surprisingly, both Kronecker and Bernoulli
	 MPSs are  loosely Bernoulli systems. 	By Abramov’s formula \cite{ab}, if two ergodic MPSs are equivalent, then  either both of them have zero
	 entropy, or both have positive finite entropy, or both have infinite entropy. Hence we know that there are at least three
	 Kakutani equivalence classes of loosely Bernoulli systems as systems with zero,
	 positive but finite, and infinite entropy must belong to different classes. It turns out
	 that there are exactly three classes \cite{orw}.
	
	The Feldman-Katok metric was introduced in \cite{FK} as a topological counterpart of
	the edit distance $\bar{f}$ (see Definition \ref{def:fk-metric}).
	In \cite{loose},  the authors used the Feldman-Katok metric to characterize
	zero
	entropy loosely Bernoulli (the authors called it {\it loosely Kronecker})  MPSs and presented a purely topological characterization of their topological models.

	 \bigskip
	
	 Let $(X,d,T)$ (or $(X,T)$ for short) be a {\it topological
	 dynamical system} (TDS for short) in the sense that $T:X\to X$ is a continuous
	 map on the compact metric space $X$ with metric $d$.
	 Parallel to the measurable case, in 1965 Alder, Konheim and McAndrew \cite{AKM65} introduced an analogous notion of topological entropy for a TDS, as a topological conjucacy invariant.
	 In \cite{bowen}, Bowen gave an equivalent definition of topological entropy using the notion of spanning or separated sets.
	 The Bowen metric, which played a key role in his definition,
	  is defined as:
	 $$d_n(x,y)=\max_{0\leq i\leq n-1}d(T^ix,T^iy).$$
	 The Bowen ball is given by $B_n(x,\ep)=\{y\in X: d_n(x,y)<\ep\}.$
	 The link between topological entropy and measurable entropy is the well-known variational principle (see, e.g., \cite{Wal82}).
	 In \cite{katok80}, Katok introduced Bowen metric and Bowen ball into ergodic theory and gave an analogous measure-theoretic entropy formula. Following the idea, a local version of the entropy formula was also obtained \cite{bk}, which is known now as Brin-Katok formula.
	
	 Corresponding to the Hamming distance, the mean metric on a TDS $(X,T)$ is given by
	 $$
	 \bar{d_n}=\frac{1}{n}\sum_{i=0}^{n-1}d(T^ix,T^iy)
	 $$
	 for $x,y\in X$ and $n\in\N$. In recent years, mean metric is confirmed to be useful in studying mean equicontinuity, bounded complexity and its related issues like Sarnak conjecture, one may refer to the survey \cite{LYY21} for details. Gr\"{o}ger and J\"{a}ger \cite{GJ15} showed that the topologoical entropy defined by mean metric is equivalent to the classical topological entropy. Huang, Wang and Ye \cite{adv} proved that the Katok's entropy formula for ergodic measures is still valid when replacing the Bowen metric $d_n$ by the mean metric $\bar{d_n}$.  Huang, Chen and Wang \cite{chen} further proved the Katok's entropy formula and Brin-Katok formula of conditional entropy in mean metrics.

	 In this paper, we replace  Bowen metric $d_n$ by  Feldman-Katok metric  $d_{FK_n}$(Definition \ref{def:fk-metric}) and give analogous entropy formulas (Theorems \ref{thm:bowen-FK}, \ref{thm:Brin-Katok} and  \ref{thm:Katok-formula}). We note that when dealing with the Feldman-Katok metric, things may become  different or more complicated,  and so new techniques or finer estimates need to be developed (refer to Remarks \ref{rem:brin-katok:1}, \ref{rem:brin-katok:2} and \ref{rem:brin-katok:3}).

 A pseudometric induced by removing the order-perserving restriction in the definition of Feldman-Katok metric is also studied (see Section \ref{sect:weak-mean pseudometric}), and it turns out that such pseudometric is coincided with the weak-mean pseudometric introduced in \cite{zhen} (Theorem \ref{thm:FK=F}).  We explain that using weak-mean pseudometric to obtain entropy formula is hopeless (Theorem  \ref{thm:F-eq-weak}), which indicates that the Feldman-Katok metric is the weakest one, in the sense of  only requiring order preserving in the process of measuring orbit segments' distance, that makes the entropy formulas valid.
	
We also study measure-theoretic equicontinuity in Feldman-Katok metric case and strengthen some results in \cite{loose} (Section \ref{sect:FK-equi}). In particular, inspired by the main idea used in \cite[Theorem 4.5]{loose}, we give an alternative proof of \cite[Theorem 4.5]{loose}  (Theorem \ref{thm:f-bar} and Remark \ref{rem:fk-f-bar}),  which is helpful for us to understand how the Feldman-Katok metric works. Furthermore, we improve a result related to measure-theoretic discrete spectrum to non-ergodic case (Theorem \ref{thm:discrete-specturm}) and as a consequence get another new proof of the sufficiency of main result \cite[Theorem 4.5]{loose} (Corollary \ref{cor:discrete-specturm}).
 \medskip
	
	 It is worth noting that when considering entropy we get the same value among using the balls defined by $d_n$, $\bar{d_n}$ and $d_{FK_n}$, nevertheless, the advantage to use  $d_{FK_n}$ is that  it is an isomorphism invariant when studying the measure FK-complexity (Proposition \ref{prop:measure-complexity}) and it allows `time delay' and `space jump' by ignoring the synchronization of points  in measuring orbit segments' distance with only order preserving required, which is reasonable when studying statistical properties of long time behaviors and leads to the property, serving as the main source of differences between Feldman-Katok pseudometric $d_{FK}$ and the mean pseudometric $\bar{d}$, that $d_{FK}$ is invariant along the orbits (see, e.g., \cite[Fact 17]{FK}).
	
	 \medskip
	 The paper is organized as follows.
	In Section \ref{sect:2}, we give entropy formulas for Feldman-Katok metric. In Section \ref{sect:weak-mean pseudometric}, we study the weak-mean pseudometric related to Feldman-Katok metric. In Section \ref{sect:FK-equi}, we study $\mu$-$FK$-equicontinuity and strengthen some results in \cite{loose}. In Appendix we collect some proofs for completeness.

	\section{entropy formulas for Feldman-Katok metric}\label{sect:2}
	In this section we  give entropy formulas for Feldman-Katok metric.
	 We announce that all our basic definitions and notations (like invariant measure, entropy, isomorphism, etc.) are as in the textbook \cite{Wal82}, and we only explain some of them as we proceed.

	\medskip
	
	First we give the definition of Feldman-Katok metric \cite{loose}:
	
Let $(X,d,T)$ be a TDS. For $x, y\in X, \delta>0$ and $n\in\N$, we define an {\it $(n, \delta)$-match} of $x$ and $y$ to be an {\bf order
preserving} (i.e. $\pi(i)<\pi(j)$ whenever $i<j$) bijection $\pi: D(\pi) \rightarrow R(\pi)$ such that $D(\pi),R(\pi) \subset\{0, 1,\ldots, n-1\}$
and for every $i \in D(\pi)$ we have $d(T^ix, T^{\pi(i)}y) < \delta$. Let $|\pi|$ be the cardinality of $D(\pi)$. We set$$\bar{f}_{n,\delta}(x, y) = 1-\frac{\max\{|\pi| : \pi \text{ is an }(n, \delta)\text{-match of }x \text{ and } y\}}{n}$$
and
$$\bar{f}_\delta(x, y) = \limsup_{n\to +\infty}\bar{f}_{n,\delta}(x, y).$$
\begin{de}\label{def:fk-metric}
Define the Feldman-Katok metric of $(X, T)$ by  $$d_{FK_n}(x, y) = \inf\{\delta> 0: \bar{f}_{n,\delta}(x, y) < \delta\}$$
and
$$d_{FK}(x, y) = \inf\{\delta> 0: \bar{f}_\delta(x, y) < \delta\}.$$	
\end{de}

\begin{rem}
It is easy to see that $d_{FK_n}(x, y)$ is  a metric, $d_{FK}(x, y)$ is a pseudometric and $$\limsup_{n\to +\infty}d_{FK_n}(x, y)=d_{FK}(x, y).$$	
\end{rem}

\medskip

For $x,y\in X$ and $n\in\N$, let
$$d_n(x,y)=\max_{0\leq i\leq n-1}d(T^ix,T^iy)$$
and
$$\bar{d}_n(x,y)=\frac{1}{n}\sum_{i=0}^{n-1}d(T^ix,T^iy)$$
be the Bowen metric and the mean metric respectively. It is easy to see  the following lemma:
\begin{lem}\label{lem:relationOfmetrics}
	$d_{FK_n}(x,y)\leq(\bar{d}_n(x,y))^{\frac{1}{2}}, \bar{d}_n(x,y)\leq d_n(x,y).$
\end{lem}
\begin{proof}
Let $\bar{d}_n(x,y)=\delta^2.$ Then
	$$\frac{1}{n}\sum_{0\leq i\leq n-1:d(T^ix,T^iy)\geq \delta}d(T^ix,T^iy)\leq\frac{1}{n}\sum_{i=0}^{n-1}d(T^ix,T^iy)=\delta^2.$$
	Hence $$|\{0\leq i\leq n-1:d(T^ix,T^iy)\geq \delta\}|\leq n\delta.$$
	Denote
$$
D=\{0\leq i\leq n-1:d(T^ix,T^iy)<\delta\}
$$
   and let $\pi=\id$ on $D$.
	Then $\pi$ is an $(n, \delta)$-match of $x,y$ and $1-\frac{|\pi|}{n}\leq \delta.$ Hence $d_{FK_n}(x,y)\leq \delta$.
	
	The second inequality is directly from definitions.
	\end{proof}

\subsection{Topological entropy formulas for  Feldman-Katok metric}

We start with the topological case.  Following the idea of Bowen \cite{bowen},
we first give the analogous notions of spanning  and separated set for $d_{FK_n}$.
\begin{de}
	Let $(X,T)$ be a TDS, $n\in\N$ and $\ep>0$.
	A subset $E$ is said to be $FK$-$(n,\ep)$ {\it spanning} if $\forall x\in X, \exists\ y\in E$ with $d_{FK_n}(x,y)\leq \ep.$ Let $sp_{FK}(n,\ep)$ denote the smallest cardinality of any $FK$-$(n,\ep)$ spanning set.
\end{de}
\begin{de}
	Let $(X,T)$ be a TDS,  $n\in\N$ and $\ep>0$.
	A subset $F$ is said to be {\it  $FK$-$(n,\ep)$ separated} if $x\neq y\in F\Rightarrow d_{FK_n}(x,y)>\ep.$ Let $sr_{FK}(n,\ep)$ denote the largest cardinality of any $FK$-$(n,\ep)$ separated set.
\end{de}
\begin{rem}
	Clearly $sp_{FK}(n,\ep)<\infty$ because of the compactness of $X$.
Similarly we have $sp_{FK}(n,\ep)\leq sr_{FK}(n,\ep)\leq sp_{FK}(n,\frac{\ep}{2})$ and hence $sr_{FK}(n,\ep)<\infty.$
\end{rem}
Now we can prove the formula for topological entropy.
\begin{thm}\label{thm:bowen-FK}
	Let $(X,T)$ be a TDS. Then
	$$h_{top}(X,T)=\lim_{\ep\to0}\liminf_{n\to\infty}\frac{1}{n}\log(sp_{FK}(n,\ep))=\lim_{\ep\to0}\limsup_{n\to\infty}\frac{1}{n}\log(sp_{FK}(n,\ep)),$$
	$$h_{top}(X,T)=\lim_{\ep\to0}\liminf_{n\to\infty}\frac{1}{n}\log(sr_{FK}(n,\ep))=\lim_{\ep\to0}\limsup_{n\to\infty}\frac{1}{n}\log(sr_{FK}(n,\ep)).$$
\end{thm}
\begin{proof}
Let   $\mathcal{U}$ be  a finite open cover of $X$. 	 Let the Lebesgue number of $\mathcal{U}$ be    $2\ep_0.$
 For any $0<\ep<\ep_0$ and $n\in\N,$ let $E$ be a $FK$-$(n,\ep)$ spanning set with $|E|=sp_{FK}(n,\ep).$ By the definitions of $FK$-$(n,\ep)$ spanning set and $d_{FK_n}$ we have
	$$X=\bigcup_{x\in E}\bigcup_{k=[(1-\ep)n]}^n\bigcup_{\substack{\pi:\ |\pi|=k,\\\pi \text{\ is order\ preserving}}		
	}\bigcap_{i\in D(\pi)}T^{-i}B_\ep(T^{\pi(i)}x).$$
	It is clear that $B_\ep(T^{\pi(i)}x)$ is contained in some element of $\mathcal{U}.$ Hence $\bigcap_{i\in D(\pi)}T^{-i}B_\ep(T^{\pi(i)}x)$ is contained in some element of $\bigvee_{i\in D(\pi)}T^{-i}\mathcal{U}.$ Note that $$\bigvee_{i=0}^{n-1}T^{-i}\mathcal{U}=(\bigvee_{i\in D(\pi)}T^{-i}\mathcal{U})\vee(\bigvee_{i\notin D(\pi)}T^{-i}\mathcal{U})$$
	and $$|\bigvee_{i\notin D(\pi)}T^{-i}\mathcal{U}|\leq|\mathcal{U}|^{n-|\pi|}.$$ This implies that each element of $\bigvee_{i\in D(\pi)}T^{-i}\mathcal{U}$ can be covered by $|\mathcal{U}|^{n-|\pi|}$ elements of $\bigvee_{i=0}^{n-1}T^{-i}\mathcal{U}.$  Hence $\bigcap_{i\in D(\pi)}T^{-i}B_\ep(T^{\pi(i)}x)$ can be covered by $|\mathcal{U}|^{n-|\pi|}$ elements of $\bigvee_{i=0}^{n-1}T^{-i}\mathcal{U}.$ Since the number of  order preserving $\pi$ with $|\pi|=k$ is $(C_n^k)^2$, it is easy to see that $X$ can be covered by
	$$|E|\sum_{k=[n(1-\ep)]}^n(C_n^k)^2|\mathcal{U}|^{n-k}$$
	elements of $\bigvee_{i=0}^{n-1}T^{-i}\mathcal{U}.$
	Hence
	\begin{equation*}
	N(\bigvee_{i=0}^{n-1}T^{-i}\mathcal{U})\leq|E|\sum_{k=[n(1-\ep)]}^n(C_n^k)^2|\mathcal{U}|^{n-k}\leq sp_{FK}(n,\ep)(n\ep+2)(C_n^{[n\ep]+1})^2|\mathcal{U}|^{n\ep}.
	\end{equation*}
	It follows that
	$$\frac{\log N(\bigvee_{i=0}^{n-1}T^{-i}\mathcal{U})}{n}\leq\frac{\log sp_{FK}(n,\ep)}{n}+\frac{\log (n\ep+2)}{n}+\frac{2\log C_n^{[n\ep]+1}}{n}+\ep\log|\mathcal{U}|.$$
	By Stirling's formula,
	$$\lim_{n\rightarrow +\infty} \frac{1}{n}\log C_n^{[n\ep]+1}=-(1-\ep)\log (1-\ep)-\ep\log \ep.$$
	We have $$h_{top}(T,\mathcal{U})\leq \lim_{\ep\to0}\liminf_{n\to\infty}\frac{1}{n}\log(sp_{FK}(n,\ep)).$$
	It follows that
	$$h_{top}(X,T)\leq \lim_{\ep\to0}\liminf_{n\to\infty}\frac{1}{n}\log(sp_{FK}(n,\ep)).$$
	On the other hand, since $d_{FK_n}(x,y)\leq d_n(x,y),$
		we have $$h_{top}(X,T)\geq \lim_{\ep\to0}\limsup_{n\to\infty}\frac{1}{n}\log(sp_{FK}(n,\ep)).$$
		
		The second formula for $sr_{FK}$ is directly from the fact that $$sp_{FK}(n,\ep)\leq sr_{FK}(n,\ep)\leq sp_{FK}(n,\frac{\ep}{2}).$$
\end{proof}
\begin{rem}\label{rem:brin-katok:1}
\begin{enumerate}
  \item Note that the method of \cite[Lemma 4.1]{GJ15} for mean-metric case can not be applied to Theorem \ref{thm:bowen-FK} for Feldman-Katok metric case. Furthermore, by Lemma \ref{lem:relationOfmetrics} and  Theorem \ref{thm:bowen-FK} we can give an alternative and simpler proof of \cite[Lemma 4.1]{GJ15}.
  \item We believe that Theorem \ref{thm:bowen-FK} can be  generalized to sequence entropy  case.
\end{enumerate}

\end{rem}

\subsection{Measure-theoretic entropy formulas for  Feldman-Katok metric}
Now we consider the measurable case.

\medskip

First we need some preparations.

Recall that the  {\it edit distance} $\bar{f}_n$ is defined by:
$$\bar{f_n}(x_0x_1\ldots x_{n-1},y_0y_1\ldots y_{n-1})=1-\frac{k}{n},$$
where $k$ is the largest integer such that there exist $$0\leq i_1<\ldots<i_k\leq n-1, 0\leq j_1<\ldots<j_k\leq n-1$$ and $x_{i_s}=y_{j_s}$ for $s=1,\ldots,k.$

Let $(X,T)$ be a TDS. We denote the set of all invariant probability  measures  of $(X,T)$ by $M(X,T)$.
Let $\mu\in M(X,T)$ and $\xi=\{A_1,\ldots,A_{|\xi|}\}$ be  a finite partition of $X$. We can identify the elements in $\bigvee_{i=0}^{n-1}T^{-i}\xi$ and $\{1,\ldots,|\xi|\}^n$ by
$$\bigcap_{i=0}^{n-1} T^{-i}A_{t_i}=(t_0,\ldots,t_{n-1}).$$ Hence for $w\in\{1,\ldots,|\xi|\}^n$ we can talk about $\mu(w)$ and for $A, B\in \bigvee_{i=0}^{n-1}T^{-i}\xi$ we can talk about $\bar{f}_n(A,B).$
For $x\in X,$    the element of  $\xi$ containing $x$ is denoted by $\xi(x)$. We simply write $\xi^n$ instead of $\bigvee_{i=0}^{n-1}T^{-i}\xi$ and $\xi^n(x)$ instead of $(\bigvee_{i=0}^{n-1}T^{-i}\xi)(x).$

\medskip

In 1983, Brin and Katok \cite{bk} introduced the Brin-Katok formula: If $(X,T)$ is a TDS and $\mu\in M(X,T)$, then for $\mu-a.e.\ x\in X$ we have
$$h_{\mu}(T,x):=\lim_{\delta\to0}\liminf_{n\to\infty}-\frac{\log\mu(B_n(x,\delta))}{n}=\lim_{\delta\to0}\limsup_{n\to\infty}-\frac{\log\mu(B_n(x,\delta))}{n},$$
where $h_{\mu}(T,x)$ is $T$-invariant and $\int h_{\mu}(T,x)dx=h_\mu(T).$ Recall that  $B_n(x,\delta)=\{y\in X: d_n(x,y)<\delta\}$ is  the Bowen ball.

\medskip

We replace  Bowen metric by  Feldman-Katok metric and denote
$$B_{d_{FK_n}}(x,\delta)=\{y\in X: d_{FK_n}(x,y)<\delta\}.$$  We have the following theorem:
\begin{thm}\label{thm:Brin-Katok}
	Let $(X,T)$ be a TDS and $\mu\in M(X,T)$. Then for $\mu-a.e.\ x\in X,$
	$$h_{\mu}(T,x)=\lim_{\delta\to0}\liminf_{n\to\infty}-\frac{\log\mu(B_{d_{FK_n}}(x,\delta))}{n}=\lim_{\delta\to0}\limsup_{n\to\infty}-\frac{\log\mu(B_{d_{FK_n}}(x,\delta))}{n}.$$
	
\end{thm}
\begin{proof}	
	Let $\{\mathcal{P}_i\}_{i=1}^\infty$
be a family of increasing finite Borel partitions of $X$ with
 $\diam(\mathcal{P}_i)\to 0$ as $i\to \infty$ and $\mu(\partial\mathcal{P}_i)=0$ for $i\in\N$, here $\diam(\mathcal{P})=\max\limits_{A\in \mathcal{P}}\diam(A)$ and $\partial\mathcal{P}=\bigcup\limits_{A\in\mathcal{P}}\partial A.$ Let $$X_0=\{x\in X:h_\mu(T,x)<+\infty\},\ X_\infty=\{x\in X:h_\mu(T,x)=+\infty\}.$$ Without loss of generality, we assume $\mu(X_0)>0$ and $\mu(X_\infty)>0.$

We first show that:
$$\lim_{\delta\to0}\liminf_{n\to\infty}-\frac{\log\mu(B_{d_{FK_n}}(x,\delta))}{n}\geq h_{\mu}(T,x), \ a.e.\ x\in X_0.$$

Fix $\ep>0.$ Since $$h_\mu(\mathcal{P}_i,x)\to h_\mu(T,x), \ a.e.  \ x\in X_0,$$ we can find some $\mathcal{P}\in\{\mathcal{P}_i\}_{i=1}^\infty$ such that $\mu(A)>\mu(X_0)-\frac{\ep}{2},$
where $$A=\{x\in X_0:|h_\mu(\mathcal{P},x)-h_\mu(T,x)|<\frac{\ep}{2}\}\subset X_0.$$
For $\tau>0$, let
$$U_\tau(\mathcal{P})=\{x\in X:\text{the ball }B(x,\tau) \text{ is not contained in }\mathcal{P}(x)\}.$$
Recall $\mathcal{P}(x)$ denotes the element of the partition $\mathcal{P}$ containing $x$. Since $$\bigcap_{\tau>0}U_\tau(\mathcal{P})=\partial\mathcal{P},$$ we have $\mu(U_\tau(\mathcal{P}))\to 0$ as $\tau\to 0.$
Choose some $0<\kappa<\ep$ such that
\begin{equation}\label{2}
2\kappa\log|\mathcal{P}|+4(-\kappa\log\kappa-(1-\kappa)\log(1-\kappa))<\frac{\ep}{2}.
\end{equation}
Then there exists $0<\delta_0<\frac{\kappa}{2}$ such that $\mu(U_{\delta_0}(\mathcal{P}))<(\frac{\kappa}{4})^2.$
We can choose some $L\in\N$ such that $\mu(E_L)>1-\frac{\ep}{2},$ where $$E_L=\{x\in X:\frac{1}{n}\sum_{i=0}^{n-1}1_{U_{\delta_0}(\mathcal{P})}(T^ix)\leq \frac{\kappa}{2},\ \forall\ n\geq L\}.$$
Hence $A\cap E_L\subset X_0$ and $\mu(A\cap E_L)>\mu(X_0)-\ep.$
Let $$A_k=\{x\in A\cap E_L: k\ep\leq h_\mu(T,x)<(k+1)\ep\}.$$
We have $\bigcup_{k=0}^\infty A_k=A\cap E_L,$ hence there exists some $N$ such that $\mu(\bigcup_{k=0}^N A_k)>\mu(X_0)-\ep.$ By Shannon-McMillan-Breiman theorem, we have
$$-\frac{\log\mu(\mathcal{P}^n(x))}{n}\to h_\mu(\mathcal{P},x),\ a.e. \ x\in X.$$
Hence we can find $B_k\subset A_k, 0\leq k\leq N$ such that $$\mu(\bigcup_{k=0}^N B_k)>\mu(X_0)-\ep,$$
 and
$\exists\ N_1\in\N, \forall\ n>N_1, \forall\ x\in B_k$, we have
\begin{equation}\label{1}
-\frac{\log\mu(\mathcal{P}^n(x))}{n}>h_\mu(\mathcal{P},x)-\frac{\ep}{2}>h_\mu(T,x)-\ep\geq (k-1)\ep.
\end{equation}
The second inequality holds because $x\in A.$

Let $N_0=\max\{N_1,L\}$. Let $n>N_0.$

{\bf Claim:} For $x\in B_k$, we have $$B_{d_{FK_n}}(x,\delta_0)\subset \bigcup_{w:\bar{f}_n(w,\mathcal{P}^n(x))<\kappa}w .$$

Proof of claim: Let $y\in B_{d_{FK_n}}(x,\delta_0).$  By the definition of $d_{FK_n}$,  there exists  an $(n, \delta_0)$-match $\pi$ of $x$ and $y$ with $|\pi|>(1-\delta_0)n$.
Since $x\in E_L,$ we have $$|\{0\leq i\leq n-1: T^ix\notin U_{\delta_0}(\mathcal{P})\}|>(1-\frac{\kappa}{2})n.$$
Note that $$|\pi|>(1-\delta_0)n>(1-\frac{\kappa}{2})n.$$
We have $$|\{i\in D(\pi): T^ix\notin U_{\delta_0}(\mathcal{P})\}|>(1-\kappa)n.$$
Let the set on the leftside be $D^\prime$. Hence $D^\prime\subset D(\pi)$ and $|D^\prime|>(1-\kappa)n.$
 For $i\in D^\prime$, since $T^{i}x\notin U_{\delta_0}(\mathcal{P})$, we have $ B(T^{i}x,\delta_0)\subset \mathcal{P}(T^{i}x).$ Note that $d(T^{i}x,T^{\pi(i)}y)<\delta_0$ by the definition of $\pi$. Hence $T^{\pi(i)}y\in\mathcal{P}(T^{i}x)$ . It follows that
 $$ \bar{f}_n(\mathcal{P}^n(x),\mathcal{P}^n(y))<\kappa$$ by the definition of $\bar{f}_n$.
The claim is proved.

\medskip

Now we estimate the measure of the set $$\{x\in B_k: \mu( B_{d_{FK_n}}(x,\delta_0))>e^{-n(k-2)\ep}\}.$$
Note that
\begin{equation*}
\begin{split}
&\{x\in B_k: \mu( B_{d_{FK_n}}(x,\delta_0))>e^{-n(k-2)\ep}\}\\
&\subset \{x\in B_k: \mu(\bigcup_{w:\bar{f}_n(w,\mathcal{P}^n(x))<\kappa}w)>e^{-n(k-2)\ep}\} \\
&\subset \{x\in B_k: \exists\ w\in B_{\bar{f}_n}(\mathcal{P}^n(x),\kappa)\ s.t.\ \mu(w)>\frac{e^{-n(k-2)\ep}}{ (C^{[n\kappa]}_n)^2|\mathcal{P}|^{n\kappa}}\}\\
&\subset\bigcup_{w\cap B_k\neq\emptyset}\{w: \exists\ w^\prime\in  B_{\bar{f}_n}(w,\kappa)\ s.t.\ \mu(w^\prime)>\frac{e^{-n(k-2)\ep}}{ (C^{[n\kappa]}_n)^2|\mathcal{P}|^{n\kappa}}\}.
\end{split}
\end{equation*}
The first relation is from claim and the second relation holds because the number of elements in $B_{\bar{f}_n}(\mathcal{P}^n(x),\kappa)$ is not more than $(C^{[n\kappa]}_n)^2|\mathcal{P}|^{n\kappa}.$ We only need to estimate the measure of the last set. The number of $w$ with $w\cap B_k\neq \emptyset$ is not more than
$$e^{n(k-2)\ep}((C^{[n\kappa]}_n)^2|\mathcal{P}|^{n\kappa})^2.$$
We estimate the measure of a single $w.$ Let  $x\in w\cap B_k.$ Hence $\mathcal{P}^n(x)=w.$ Since $x\in B_k,$ by (\ref{1}) we have
$$-\frac{\log\mu(\mathcal{P}^n(x))}{n}>(k-1)\ep.$$ That is $\mu(w)<e^{-n(k-1)\ep}$.
Hence we have
\begin{equation*}
\begin{split}
&\mu(\{x\in B_k: \mu( B_{d_{FK_n}}(x,\delta_0))>e^{-n(k-2)\ep}\})\\
&<e^{-n(k-1)\ep}e^{n(k-2)\ep}((C^{[n\kappa]}_n)^2|\mathcal{P}|^{n\kappa})^2\\
&=e^{-n\ep}((C^{[n\kappa]}_n)^2|\mathcal{P}|^{n\kappa})^2.
\end{split}
\end{equation*}
By Stirling's formula $C^{[n\kappa]}_n\approx e^{n(-\kappa\log\kappa-(1-\kappa)\log(1-\kappa))}.$ Hence $$(C^{[n\kappa]}_n)^4|\mathcal{P}|^{2n\kappa}\approx e^{4n(-\kappa\log\kappa-(1-\kappa)\log(1-\kappa))+2n\kappa\log|\mathcal{P}|}<e^\frac{n\ep}{2}$$
by (\ref{2}). Hence $$\sum e^{-n\ep}((C^{[n\kappa]}_n)^2|\mathcal{P}|^{n\kappa})^2$$ is convergent. By the Borel-Cantelli lemma, we have
$$\liminf_{n\to\infty}-\frac{\log\mu(B_{d_{FK_n}}(x,\delta_0))}{n}\geq (k-2)\ep>h_\mu(T,x)-3\ep,\ a.e.\ x\in B_k.$$
The last inequality holds because $x\in A_k.$ Hence  we have
$$\lim_{\delta\to0}\liminf_{n\to\infty}-\frac{\log\mu(B_{d_{FK_n}}(x,\delta))}{n}>h_\mu(T,x)-3\ep, \ a.e. \ x\in \bigcup_{k=0}^N B_k.$$
Now we get that for any $\ep>0,$ there is some subset $E$ of $X_0$ with $\mu(E)>\mu(X_0)-\ep$ such that $$\lim_{\delta\to0}\liminf_{n\to\infty}-\frac{\log\mu(B_{d_{FK_n}}(x,\delta))}{n}>h_\mu(T,x)-3\ep,\ a.e.\ x\in E.$$ Hence
$$\lim_{\delta\to0}\liminf_{n\to\infty}-\frac{\log\mu(B_{d_{FK_n}}(x,\delta))}{n}\geq h_{\mu}(T,x),\ a.e.\ x\in X_0.$$

\medskip

Similarly we can prove $$\lim_{\delta\to0}\liminf_{n\to\infty}-\frac{\log\mu(B_{d_{FK_n}}(x,\delta))}{n}=+\infty,\ a.e.\ x\in X_\infty.$$

Combining the fact that $d_{FK_n}\leq d_n$, we have$$\lim_{\delta\to0}\limsup_{n\to\infty}-\frac{\log\mu(B_{d_{FK_n}}(x,\delta))}{n}\leq h_{\mu}(T,x),\ a.e.\ x\in X. $$
	\end{proof}
\begin{rem} \label{rem:brin-katok:2}
\begin{enumerate}
  \item It is fair to mention  that the proof of Theorem \ref{thm:Brin-Katok} follows the original spirit of  Brin-Katok formulas \cite{bk}, nonetheless, there are more parameters appearing in the Feldman-Katok metric case and some finer estimates  and  techniques  need to be developed.

  \item
	By the definition of $d_{FK_n}$, we know that $d_{FK_n}(x, Tx)\leq \frac{1}{n}$. Hence $$B_{d_{FK_n}}(x,\delta)\subset B_{d_{FK_n}}(Tx,\delta+\frac{1}{n})$$ and $$B_{d_{FK_n}}(Tx,\delta)\subset B_{d_{FK_n}}(x,\delta+\frac{1}{n}).$$  Then we can deduce directly (does not rely on the continuity of $T$) that the rightside of the formula in Theorem \ref{thm:Brin-Katok} is $T$-invariant.
\end{enumerate}
\end{rem}

\medskip

We similarly introduce the measure-theoretic version of $sp_{FK}(n,\ep)$ as follows.
\begin{de}
	Let $(X,T)$ be a TDS, $n\in\N,\ \ep>0$ and $\mu\in M(X,T)$. Denote
	$$sp_{FK}(d,\mu,n,\ep)=\min\{m\in \N:\exists\ x_1,x_2,\ldots,x_m\in X\ s.t.\ \mu(\bigcup_{i=1}^mB_{d_{FK_n}}(x_i,\ep))>1-\ep\}.$$
		We often use $sp_{FK}(\mu,n,\ep)$ for simplicity when the metric $d$ is clear from the context.
\end{de}
Now  we   give  Katok's formula for  Feldman-Katok metric.
Here we  follow the idea of \cite[Theorem 2.4]{adv} with suitable modifications.

\begin{thm} \label{thm:Katok-formula}
	Let $(X,T)$ be a TDS and $\mu\in M(X,T)$. Then we have
	$$h_\mu(T)\le \lim_{\epsilon\rightarrow 0}\liminf_{n\rightarrow \infty}\frac{1}{n}\log sp_{FK}(\mu,n, \epsilon).$$
	If $\mu$ is ergodic, then
	$$ h_\mu(T)=\lim_{\epsilon\rightarrow 0}\limsup_{n\rightarrow \infty}\frac{1}{n}\log sp_{FK}(\mu,n, \epsilon)=\lim_{\epsilon\rightarrow 0}\liminf_{n\rightarrow \infty}\frac{1}{n}\log sp_{FK}(\mu,n, \epsilon).$$
	
\end{thm}
\begin{proof}
	Note that $d_{FK_n}\leq d_n,$ we only need to prove that
	\begin{equation*}
	h_\mu(T)\le \lim_{\epsilon\rightarrow 0}\liminf_{n\rightarrow \infty}\frac{1}{n}\log sp_{FK}(\mu,n, \epsilon).
	\end{equation*}
	Given  a Borel partition $\eta$ of $X$ and $\delta>0$, we need to show
	$$h_\mu(T,\eta)< \lim_{\epsilon\rightarrow 0}\liminf_{n\rightarrow \infty}\frac{1}{n}\log sp_{FK}(\mu,n, \epsilon)+2\delta.$$

	Let $|\eta|=k.$
	Take $0<\kappa<\frac{1}{2}$ with $$-4\kappa\log 2\kappa-2(1-2\kappa)\log (1-2\kappa)+4\kappa \log (k+1) <\delta.$$
	It is easy to see that there is a Borel partition $\xi=\{B_1, \ldots, B_k,B_{k+1}\}$ of $X$ such that
	$B_i$ is closed for $1\le i\le k$, $\mu(B_{k+1})<\kappa^2$, and
	\begin{equation*}
	h_\mu(T,\eta)\le h_\mu(T,\xi)+\delta.
	\end{equation*}
	Let $K=\cup_{i=1}^kB_i$ and $b=\min_{1\le i<j\le k} d(B_i,B_j)$. Then $\mu(K)>1-\kappa^2$ and $b>0$.
	
	\medskip
	
	Let $0<\epsilon<\frac{b}{2}$, $n\in\mathbb{N}$. By the definition of $sp_{FK}(\mu,n,\ep)$, there are $x_1,\ldots, x_{m(n)}\in X$ such that
	$$\mu\big(\bigcup_{i=1}^{m(n)} B_{d_{FK_n}}(x_i,\epsilon)\big)>1-\epsilon,$$ where
	$m(n)=sp_{FK}(\mu, n,\epsilon).$ Let $$F_n=\bigcup_{i=1}^{m(n)} B_{d_{FK_n}}(x_i,\epsilon).$$

	Let $$ E_n=\{x\in X:\frac{|\{0\leq i\leq n-1: T^ix\in K\}|}{n}\le 1-\kappa\}.$$
	
	It is easy to see that
	$\mu(E_n)<\kappa$. Put $W_n=(K\cap F_n)-E_n$. Then
	$$\mu(W_n)>1-\kappa^2-\kappa-\epsilon>1-2\kappa-\epsilon.$$
	For $z\in W_n$, we have
	\begin{equation*}
	\frac{|\{0\leq i\leq n-1: T^iz\in K\}|}{n}>1-\kappa.
	\end{equation*}

	{\bf Claim}:
	\begin{equation*}
	|\{A\in \bigvee_{j=0}^{n-1}T^{-j}\xi: A\cap B_{d_{FK_n}}(x_i,\epsilon)\cap W_n \not=\emptyset\}|\le (C_n^{[(1-2\kappa-2\ep)n]})^2 \cdot (k+1)^{[(2\kappa+2\ep)n]+1}.
	\end{equation*}
	
	Proof of  claim: Let
	$$x\in A_1\cap B_{d_{FK_n}}(x_i,\epsilon)\cap W_n, \ y\in A_2\cap B_{d_{FK_n}}(x_i,\epsilon)\cap W_n,$$ where $A_1,A_2\in \bigvee_{j=0}^{n-1}T^{-j}\xi$. Then $d_{FK_n}(x,y)<2\epsilon$.
	Denote $$D_x=\{0\leq j\leq n-1: T^jx\in K\},\ D_y=\{0\leq j\leq n-1: T^jy\in K\}.$$ Then $$|D_x|> (1-\kappa)n,\ |D_y|> (1-\kappa) n.$$
	Since $d_{FK_n}(x,y)<2\epsilon$,  let $\pi$ be an $(n, 2\ep)$-match  of $x \text{ and } y$ with $|\pi|>(1-2\ep)n$.
	Note that $$|\pi^{-1}\left (\pi (D(\pi)\cap D_x)\cap D_y\right )|>(1-2\kappa-2\ep)n.$$
Let the set on the leftside be $D^\prime.$
Then $|D^\prime|>(1-2\kappa-2\ep)n$. For every $j\in D^\prime,$
	we have
	$$d(T^{j}x, T^{\pi(j)}y)<2\ep<b,$$ and $\ T^{j}x,T^{\pi(j)}y\in K.$
	Hence $T^{j}x, T^{\pi(j)}y$ must be in  one of the  same element of $\{B_1, \ldots, B_k\}.$
	It follows that $$\bar{f}_n(A_1,A_2)\leq 2\kappa+2\ep.$$
	Note that
	the number of  $A$  satisfying $$\bar{f}_n(A_1,A)\leq2\kappa+2\ep$$ is not more than $$(C_n^{[(1-2\kappa-2\ep)n]})^2 \cdot (k+1)^{[(2\kappa+2\ep)n]+1}.$$
	The proof of the claim is finished.
	
	\medskip
	
	Now we estimate $h_\mu(T,\xi)$. We have
	\begin{align*}
	H_\mu(\bigvee_{j=0}^{n-1}T^{-j}\xi)&\le
	H_\mu(\bigvee_{j=0}^{n-1}T^{-j}\xi\vee \{ W_n,X\setminus W_n\})\\
	&\le \log \big( \sum_{i=1}^{m(n)} |\{A\in\bigvee_{j=0}^{n-1}T^{-j}\xi: A\cap B_{d_{FK_n}}(x_i,\epsilon)\cap W_n \not=\emptyset\}|\big)\\
	&\hskip0.8cm +(2\kappa+\epsilon)n \log (k+1)+\log 2.
	\end{align*}
	Thus
	\begin{align*}
	h_\mu(T,\xi)&=\lim_{n\rightarrow +\infty} \frac{1}{n}H_\mu(\bigvee_{j=0}^{n-1}T^{-j}\xi)\\
	& \leq \lim_{\epsilon\rightarrow 0}\liminf_{n\rightarrow \infty}\frac{1}{n}\log sp_{FK}(\mu,n, \epsilon)+2\limsup_{n\rightarrow +\infty} \frac{1}{n}\log C_n^{[(1-2\kappa-2\ep)n]} +(4\kappa+3\epsilon) \log (k+1).
	\end{align*}
	By Stirling's formula and let $\epsilon\rightarrow 0$  we obtain
	\begin{equation*}
	\begin{split}
	h_\mu(T,\xi)&\leq \lim_{\epsilon\rightarrow 0}\liminf_{n\rightarrow \infty}\frac{1}{n}\log sp_{FK}(\mu,n, \epsilon)-4\kappa\log 2\kappa-2(1-2\kappa)\log (1-2\kappa)+4\kappa \log (k+1)\\ &<\lim_{\epsilon\rightarrow 0}\liminf_{n\rightarrow \infty}\frac{1}{n}\log sp_{FK}(\mu,n, \epsilon)+\delta.
	\end{split}
	\end{equation*}
	Hence
	$$h_\mu(T,\eta)< h_\mu(T,\xi)+\delta<\lim_{\epsilon\rightarrow 0}\liminf_{n\rightarrow \infty}\frac{1}{n}\log sp_{FK}(\mu,n, \epsilon)+2\delta.$$
\end{proof}
\begin{rem} \label{rem:brin-katok:3}
	\begin{enumerate}
		\item \label{rem:brin-katok:3:1} 	Theorem \ref{thm:bowen-FK} can also be obtained by Theorem \ref{thm:Katok-formula} and the variational principle.
		\item\label{rem:brin-katok:3:2}
		 We believe that Theorems \ref{thm:Brin-Katok} and \ref{thm:Katok-formula} can also be  generalized   to  the conditional  case (see \cite{zxm,chen}).
\item \label{rem:brin-katok:3:3} From Theorem \ref{thm:Katok-formula} it is easy to see that if $\{sp_{FK}(\mu,n,\ep)\}_{n=1}^\infty$ is bounded for every $\ep>0$, then the system  must have zero entropy.
	\end{enumerate}
\end{rem}

\medskip

When studying the Sarnak conjecture, the authors in \cite{adv} considered the  measure complexity of a system in the mean metric. The advantage
to use mean metric is that it is an isomorphism invariant (see Proposition 2.2 therein for details). We explain this is still valid for  Feldman-Katok metric.

Let $U(n):\mathbb{N}\rightarrow [1,+\infty)$ be an increasing sequence with $\lim_{n\rightarrow +\infty} U(n)=+\infty$.
We say that the {\it measure FK-complexity of $(X,d,T,\mu)$ is weaker than $U(n)$}
if $$\liminf_{n\rightarrow +\infty} \frac{sp_{FK}(d,\mu,n,\ep)}{U(n)}=0$$ for any $\epsilon>0$. Following the idea of \cite[Proposition 2.2]{adv}, we can prove the following proposition. The  proof is moved to Appendix.

\begin{prop}\label{prop:measure-complexity}
	Assume $(X,\mathcal{B}(X),T,d,\mu)$ is measurably isomorphic to $(Y,\mathcal{B}(Y),S,d',\nu)$,
	and $U(n):\mathbb{N}\rightarrow [1,+\infty)$ is an increasing sequence with $\lim_{n\rightarrow +\infty} U(n)=+\infty$.
	Then the measure FK-complexity of $(X,d,T,\mu)$  is
	weaker than $U(n)$ if and only if the measure FK-complexity of $(Y,d',S,\nu)$ is weaker than $U(n)$.
\end{prop}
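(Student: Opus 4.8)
The plan is to transfer the measure FK-complexity condition across the isomorphism by a direct density/truncation argument, following the scheme of \cite[Proposition 2.2]{adv} but adapting it to the Feldman–Katok setting. Let $\phi\colon (X,\mathcal B(X),\mu)\to(Y,\mathcal B(Y),\nu)$ be the measurable isomorphism, so that $\phi$ is a bimeasurable bijection between full-measure invariant subsets, $\phi_*\mu=\nu$, and $\phi\circ T=S\circ\phi$ on that set. By symmetry it suffices to prove one implication, say that weakness than $U(n)$ for $(X,d,T,\mu)$ implies the same for $(Y,d',S,\nu)$. The obstruction is that $\phi$ need not be continuous, so it does not send small $d_{FK_n}$-balls to small $d'_{FK_n}$-balls; the remedy is Lusin's theorem.

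The first step is to fix $\ep>0$ and, using Lusin's theorem, to choose a compact set $Z\subset X$ with $\mu(Z)>1-\ep/4$ on which $\phi$ is uniformly continuous; hence there is $\eta\in(0,\ep)$ such that $x,x'\in Z$ and $d(x,x')<\eta$ imply $d'(\phi x,\phi x')<\ep/2$. The second step handles the points whose orbit spends too little time in $Z$: by the pointwise ergodic theorem (applied to $1_{X\setminus Z}$, or more simply by the maximal inequality, since we only need an invariant-measure statement) there is a set $G\subset X$ with $\mu(G)>1-\ep/4$ and $N_0\in\N$ such that for $x\in G$ and $n\ge N_0$ one has $\frac1n|\{0\le i\le n-1: T^ix\in Z\}|>1-\ep$. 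The third step is the key combinatorial observation: if $x,x'\in G$, both have the above orbit-frequency property, and $d_{FK_n}(x,x')<\delta$ for some small $\delta$, then taking an $(n,\delta)$-match $\pi$ and intersecting its domain with the good time-sets for $x$ and $x'$ and with $\phi^{-1}(Z)\cap\dots$ appropriately, one finds that on a subset $D'\subset D(\pi)$ with $|D'|>(1-c\ep)n$ (for an explicit constant $c$) we have $T^ix, T^{\pi(i)}x'\in Z$ and $d(T^ix,T^{\pi(i)}x')<\delta$, whence $d'(S^i\phi x,S^{\pi(i)}\phi x')<\ep/2$ provided $\delta\le\eta$; restricting $\pi$ to $D'$ then exhibits an $(n,\ep)$-match of $\phi x$ and $\phi x'$ of size $>(1-c\ep)n$, so $d'_{FK_n}(\phi x,\phi x')\le\max(c\ep,\ep/2)$. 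In other words, $\phi$ carries the restriction to $G$ of a $d_{FK_n}$-ball of small radius into a $d'_{FK_n}$-ball of radius $O(\ep)$.

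The fourth step assembles these pieces. Given any $\ep'>0$, choose $\ep$ small enough that the radius $O(\ep)$ produced above is $<\ep'$ and that $\ep<\ep'$. Apply the hypothesis at scale $\delta:=\min(\eta,\ep)$: there is a subsequence $n_j\to\infty$ along which $sp_{FK}(d,\mu,n_j,\delta)/U(n_j)\to0$, witnessed by centers $x_1,\dots,x_{m(n_j)}\in X$ with $\mu\big(\bigcup_i B_{d_{FK_{n_j}}}(x_i,\delta)\big)>1-\delta$. Replacing each $x_i$ by a point of $B_{d_{FK_{n_j}}}(x_i,\delta)\cap G$ when that intersection is nonempty (and discarding the others), and using that $\mu(G)>1-\ep/4$, we still cover all but a set of $\mu$-measure $O(\ep)<\ep'$ of $G$-points by balls $B_{d_{FK_{n_j}}}(x_i,2\delta)$ with $x_i\in G$; pushing forward by $\phi$ and invoking step three gives $\le m(n_j)$ points $y_i=\phi x_i\in Y$ whose $d'_{FK_{n_j}}$-balls of radius $\ep'$ cover a subset of $Y$ of $\nu$-measure $>1-\ep'$ (for $n_j\ge N_0$). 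Hence $sp_{FK}(d',\nu,n_j,\ep')\le m(n_j)$, and dividing by $U(n_j)$ and letting $j\to\infty$ yields $\liminf_n sp_{FK}(d',\nu,n,\ep')/U(n)=0$. Since $\ep'>0$ was arbitrary, the measure FK-complexity of $(Y,d',S,\nu)$ is weaker than $U(n)$, as desired. The main obstacle is entirely in step three: bookkeeping the several ``bad'' time-sets (outside $Z$ for $x$, outside $\phi^{-1}(Z)$ for $x'$, outside $D(\pi)$ or its image) so that after removing all of them a definite proportion $(1-c\ep)n$ of indices survives on which the match, membership in $Z$, and the uniform-continuity estimate simultaneously apply; once the constant $c$ is tracked the rest is routine.
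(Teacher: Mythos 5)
Your proposal is correct and follows essentially the same route as the paper: Lusin's theorem to get a compact set where $\phi$ is uniformly continuous, a frequency-of-visits estimate to that set, the combinatorial transfer of an $(n,\delta)$-match for $x,y$ restricted to the good time-sets into an $(n,\ep)$-match for $\phi x,\phi y$, and the re-centering of the covering balls inside the good set before pushing forward. The only cosmetic difference is that you control visit frequencies uniformly in $n\ge N_0$ via the ergodic theorem, while the paper uses a per-$n$ Markov-inequality set $E_n$; both yield the same comparison $sp_{FK}(d',\nu,n,\ep)\le sp_{FK}(d,\mu,n,\delta/2)$ and hence the same conclusion.
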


\section{the weak-mean pseudometric}\label{sect:weak-mean pseudometric}
By Lemma \ref{lem:relationOfmetrics} we know that ${d}_{FK_n}(x, y)\leq(\bar{d}_n(x,y))^{\frac{1}{2}}\leq d_n(x,y)^{\frac{1}{2}}.$ It is natural to ask that does there exist a weaker metric (in the mean form) so that the entropy formulas are still valid? Is the order-preservation of $\pi$ in the definition of Feldman-Katok metric necessary?

In this section we study another pseudometric related to Feldman-Katok pseudometric and discuss the questions above.

\medskip

If we ignore the order in the definition of Feldman-Katok metric, we have an analogous definition:

For $x, y\in X, \delta>0$ and $n\in\N$, we define an $(n, \delta)^*$-match of $x$ and $y$ to be a  bijection $\pi$ (not necessary order-preserving)$: D(\pi) \rightarrow R(\pi)$ such that $D(\pi),R(\pi) \subset\{0, 1,\ldots, n-1\}$
and for every $i \in D(\pi)$ we have $d(T^ix, T^{\pi(i)}y) < \delta$. The fit $|\pi|$ of an $(n, \delta)^*$-match $\pi$
is the cardinality of $D(\pi)$. We set$$\tilde{f}_{n,\delta}(x, y) = 1-\frac{\max\{|\pi| : \pi \text{ is an }(n, \delta)^*\text{-match of }x \text{ and } y\}}{n}$$
and
$\tilde{f}_\delta(x, y) = \limsup_{n\to +\infty}\tilde{f}_{n,\delta}(x, y)$.
$$\tilde{d}_{FK_n}(x, y) = \inf\{\delta> 0: \tilde{f}_{n,\delta}(x, y) < \delta\},$$
$$\tilde{d}_{FK}(x, y) = \inf\{\delta> 0: \tilde{f}_\delta(x, y) < \delta\}.$$

\begin{rem}
 $\tilde{d}_{FK}$ is indeed a  pseudometric. By definition we have
 $\tilde{d}_{FK_n}(x, y)\leq {d}_{FK_n}(x, y),$ $\tilde{d}_{FK}(x, y)\leq {d}_{FK}(x, y).$
\end{rem}

We explain that the entropy formula for $\tilde{d}_{FK}$ is not valid.
In fact, as we observed,  $\tilde{d}_{FK}$ is just the  pseudometric defined in \cite{zhen}, which is  called  {\it weak-mean pseudometric}.

\medskip

The weak-mean pseudometric is defined as follows:

For $x,y\in X$ and $n\in\N,$ define
$$F_n(x,y)=\frac{1}{n}\inf_{\sigma\in S_n}\sum_{k=0}^{n-1}d(T^kx,T^{\sigma(k)}y)$$
and
	$$F(x,y)=\limsup_{n\to \infty}\frac{1}{n}\inf_{\sigma\in S_n}\sum_{k=0}^{n-1}d(T^kx,T^{\sigma(k)}y).$$
Where $S_n$ denotes the set of all  permutations of $\{0,1\ldots,n-1\}$.

For the properties of weak-mean pseudometric, please see \cite{zhen} for details.

\medskip

We have the following observation:
\begin{thm}	\label{thm:FK=F}
Let $(X,T)$ be a TDS, then	$\tilde{d}_{FK}$ and $F$ are equivalent.
\end{thm}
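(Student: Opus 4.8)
The plan is to show that the two pseudometrics $\tilde d_{FK}$ and $F$ induce the same uniformity, i.e. that each dominates a fixed increasing function of the other. Since both are bounded (by $\operatorname{diam} X$), it suffices to produce, for every $\varepsilon > 0$, some $\delta > 0$ with: $F(x,y) < \delta \Rightarrow \tilde d_{FK}(x,y) \le \varepsilon$, and $\tilde d_{FK}(x,y) < \delta \Rightarrow F(x,y) \le \varepsilon$. In fact I would aim for the clean quantitative bounds $\tilde d_{FK}(x,y) \le (F(x,y))^{1/2}$ and $F(x,y) \le \tilde d_{FK}(x,y) + (\operatorname{diam} X)\cdot\tilde d_{FK}(x,y)$, paralleling exactly Lemma \ref{lem3.4} — the only difference being that the order-preserving constraint on $\pi$ is dropped, so one is free to use arbitrary permutations $\sigma \in S_n$ rather than partial order-preserving bijections.

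For the first inequality ($\tilde d_{FK} \lesssim \sqrt{F}$), I would argue at the finite level. Fix $n$ and set $\delta^2 = F_n(x,y)$, so there is $\sigma \in S_n$ with $\frac1n\sum_{k=0}^{n-1} d(T^kx, T^{\sigma(k)}y) = \delta^2$ (or arbitrarily close). Exactly as in the proof of Lemma \ref{lem3.4}, Markov's inequality gives $|\{0\le k\le n-1 : d(T^kx,T^{\sigma(k)}y) \ge \delta\}| \le n\delta$; restricting $\sigma$ to the complementary index set yields an $(n,\delta)^*$-match $\pi$ (no order constraint to check!) with $|\pi| \ge (1-\delta)n$, hence $\tilde f_{n,\delta}(x,y) \le \delta$ and $\tilde d_{FK_n}(x,y) \le \delta$. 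Taking $\limsup_n$ and using $\limsup_n \tilde d_{FK_n} = \tilde d_{FK}$ (and the analogous statement $\limsup_n F_n = F$) gives $\tilde d_{FK}(x,y) \le (F(x,y))^{1/2}$.

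For the reverse inequality, suppose $\tilde d_{FK_n}(x,y) < \delta$, so there is an $(n,\delta)^*$-match $\pi$ with $|\pi| > (1-\delta)n$. Extend $\pi : D(\pi) \to R(\pi)$ to a full permutation $\sigma \in S_n$ by matching the remaining $\le \delta n$ indices of $\{0,\dots,n-1\}\setminus D(\pi)$ to $\{0,\dots,n-1\}\setminus R(\pi)$ arbitrarily (a bijection between two sets of equal cardinality exists). Then $\frac1n\sum_{k=0}^{n-1} d(T^kx, T^{\sigma(k)}y) \le \frac1n\big(|\pi|\cdot\delta + (n-|\pi|)\operatorname{diam} X\big) < \delta + \delta\operatorname{diam} X$, so $F_n(x,y) < \delta(1+\operatorname{diam} X)$. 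Passing to $\limsup_n$ and optimizing over $\delta$ close to $\tilde d_{FK}(x,y)$ gives $F(x,y) \le \tilde d_{FK}(x,y)(1+\operatorname{diam} X)$. Combining the two bounds, $\tilde d_{FK}$ and $F$ generate the same topology/uniformity; in particular $\tilde d_{FK}(x,y) = 0 \iff F(x,y) = 0$, so "equivalent" holds in whatever precise sense the paper intends.

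The routine parts are the two Markov/extension estimates; the only point requiring a little care is the interchange of $\limsup_n$ with the inequalities and making sure the finite-$n$ comparisons are between $\tilde f_{n,\delta}$ and $F_n$ with \emph{matching} $n$ — here it helps that both $\tilde d_{FK}$ and $F$ are defined through $\limsup_n$ of their $n$-th stages, so no delicate subsequence juggling is needed. I expect the main (minor) obstacle to be bookkeeping the $\inf$-over-$\delta$ definitions of $\tilde d_{FK_n}$ cleanly enough that the constant $(1+\operatorname{diam} X)$ comes out without circular reasoning; this is handled by working with an arbitrary $\delta > \tilde d_{FK_n}(x,y)$ and letting it decrease at the end.
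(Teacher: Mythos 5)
Your proposal is correct and follows essentially the same route as the paper: one direction via Markov's inequality applied to a near-optimal permutation (yielding $\tilde d_{FK}\le F^{1/2}$), the other by extending an $(n,\delta)^*$-match to a full permutation and bounding the unmatched indices by the diameter. The only cosmetic difference is that the paper states the second bound as $F\le 2\,\tilde d_{FK}$ (implicitly normalizing $\operatorname{diam}X\le 1$), while you keep the explicit factor $1+\operatorname{diam}X$.
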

\begin{proof}
Without loss of generality, assume $\diam(X)\leq1$.	Let $$a=F(x,y)=\limsup_{n\to \infty}\frac{1}{n}\inf_{\sigma\in S_n}\sum_{k=0}^{n-1}d(T^kx,T^{\sigma(k)}y).$$
	For $\ep>0$, we have $$\frac{1}{n}\inf_{\sigma\in S_n}\sum_{k=0}^{n-1}d(T^kx,T^{\sigma(k)}y)<a+\ep$$ for $n$ sufficiently large. Fix $n$ and choose $\sigma\in S_n$ such that $$\frac{1}{n}\sum_{k=0}^{n-1}d(T^kx,T^{\sigma(k)}y)<a+\ep.$$
	Let $$D=\{0\leq k\leq n-1: d(T^kx,T^{\sigma(k)}y)<(a+\ep)^{\frac{1}{2}}\}.$$
	It is easy to estimate that $|D|> (1-(a+\ep)^{\frac{1}{2}})n$.
	Let $\pi=\sigma$ on $D$.  Hence $\pi$ is an $(n, (a+\ep)^{\frac{1}{2}})^*$-match of $x$ and $y$. It follows that
	$$\tilde{f}_{n,(a+\ep)^{\frac{1}{2}}}(x, y)\leq 1-\frac{|D|}{n}<(a+\ep)^{\frac{1}{2}}.$$  By the definition of  $\tilde{d}_{FK}$, we have  $\tilde{d}_{FK}(x,y)\leq (a+\ep)^{\frac{1}{2}}$. Let $\ep\to 0$, we have $\tilde{d}_{FK}(x,y)\leq a^{\frac{1}{2}}=F(x,y)^{\frac{1}{2}}$.
	
On the other hand,	let $b=\tilde{d}_{FK}(x, y)$. For $\ep>0$, there exists $0<\delta<b+\ep$ such that $\tilde{f}_\delta(x, y) < \delta.$ Hence $\tilde{f}_{n,\delta}(x, y)<\delta$ for $n$ sufficiently large. Fix $n$.  Let  $\pi$ be an $(n, \delta)^*$-match of $x,y$ with $|\pi|>(1-\delta)n$. Choose some $\sigma\in S_n$ such that $\sigma=\pi$ on $D(\pi)$. We have \begin{equation*}
	\begin{split}
	&\frac{1}{n}\inf_{\sigma\in S_n}\sum_{k=0}^{n-1}d(T^kx,T^{\sigma(k)}y)\leq\frac{1}{n}\sum_{k=0}^{n-1}d(T^kx,T^{\sigma(k)}y)\\&=\frac{1}{n}\sum_{k\in D(\pi)}d(T^kx,T^{\sigma(k)}y)+\frac{1}{n}\sum_{k\notin D(\pi)}d(T^kx,T^{\sigma(k)}y)\\
	&<2\delta<2(b+\ep).
	\end{split}
	\end{equation*}
	Hence $$F(x,y)=\limsup_{n\to \infty}\frac{1}{n}\inf_{\sigma\in S_n}\sum_{k=0}^{n-1}d(T^kx,T^{\sigma(k)}y)\leq2(b+\ep).$$
	Let $\ep\to0$, we have $F(x,y)\leq2b=2\tilde{d}_{FK}(x, y).$
\end{proof}

Now we will discuss $F$ instead of $\tilde{d}_{FK}.$

\medskip

The notion of $F$-equicontinuity  was introduced in \cite{zhen}.
\begin{de}
		Let $(X, T)$ be a TDS. We call $(X,T)$ is {\it$F$-equicontinuous} if
		for every $\epsilon>0$, there exists $\delta>0$ such that
		for every $x,y\in X$, $d(x, y) < \delta\Rightarrow F(x, y) <\epsilon$.
\end{de}
\begin{rem}
	\begin{enumerate}
		\item 	In \cite{c}, the authors proved that $F$-equicontinuity is equivalent to a stronger statement, called $\{F_n\}$-equicontinuity: For every $\epsilon>0$, there exists $\delta>0$ such that
		for every $x,y\in X$, $d(x, y) < \delta\Rightarrow F_n(x, y) <\epsilon, \forall \ n\in \N.$
		\item
			In \cite{zhen}, the authors proved that a TDS is uniquely ergodic if and only if $F(x,y)= 0$ for all $x,y\in X$. In particular, it is $\{F_n\}$-equicontinuous. In this case, if we analogously define the $F$-$(n,\ep)$ spanning set and $sp_{F}(n,\ep)$, then  $\{F_n\}$-equicontinuity implies that $\{sp_{F}(n,\ep)\}_{n=1}^\infty$ is bounded for every $\ep>0$. While there exists  a uniquely ergodic TDS with positive entropy. In fact, by Jewett-Krieger theorem (please see \cite{bookg} etc.) every ergodic system has a uniquely ergodic topological model. Hence by
			the variational principle,  the uniquely ergodic topological model of an ergodic positive entropy system has positive entropy.
	\end{enumerate}
\end{rem}

\medskip

In \cite{loose}, the authors used the notion of {\it $\mu$-$FK$-equicontinuity} to characterize loosely Kronecker system.
Here we can similarly define the notion of $\mu$-$F$-equicontinuity:
\begin{de}
	Let $(X, T)$ be a TDS and $\mu\in M(X,T)$. We
	say that $(X, T )$ is {\it $\mu$-$F$-equicontinuous} if for every $\tau>0$ there exists a compact set
	$M\subset X$ with $\mu(M)>1-\tau$, such that for every $\epsilon>0$, there exists $\delta>0$ such that
	for every $x,y\in M$, $d(x, y) < \delta\Rightarrow F(x, y) <\epsilon$.
\end{de}
\begin{rem}
	We can prove $\mu$-$F$-equicontinuity is equivalent to a stronger statement, we call it $\mu$-$\{F_n\}$-equicontinuity: For every $\tau>0$ there exists a compact set
	$M\subset X$ with $\mu(M)>1-\tau$, such that for every $\epsilon>0$, there exists $\delta>0$ such that
	for every $x,y\in M$, $d(x, y) < \delta\Rightarrow F_n(x, y) <\epsilon,\ \forall\ n\in\N$. Please see Appendix.
\end{rem}
We explain that the order in the
definition of Feldman-Katok metric is crucial in obtaining an entropy formula.
If we ignore the order, things may become quite different, and to obtain an entropy formula in this case is hopeless.

\medskip

First we need some preparations.

Let $(X,T)$ be a TDS and $x\in X.$ We call $\mu\in M(X,T)$ is a {\em distribution measure} of  $x$ if $\mu$ is a limit of some subsequence of $\{\frac{1}{n}\sum_{i=0}^{n-1}\delta_{T^ix}\}_{n=1}^{\infty}$. The set of all distribution measures of $x$ is denoted by $w(x)$. Let $2^X=\{A\subset X: A \text{\ is\ a\ non-empty\ closed\ subset\ of\ }X\}$. The Hausdorff metric on $2^X$ is defined as follows: for every $A, B\in 2^X,$ we set $$d_H(A,B)=\max\{\inf\{\ep>0:B\subset B_\ep(A)\}, \inf\{\ep>0:A\subset B_\ep(B)\}\},$$
where $B_\ep(A)=\{x\in X: d(x,A)<\ep\}.$ Let $H$ be the Hausdorff metric on $2^{M(X,T)}$.

 The following theorem is helpful:
\begin{thm}\label{thm2}\cite[Corollary 3.8.]{c}
	Let $(X,T)$ be a TDS and $x\in X.$
	If $w(x)$ consists of a single point, then 	for every $\ep>0$, there is $\delta > 0$ such that for every $y\in X$ with
	$H(w(x),w(y))<\delta$, we have $F(x,y)<\ep. $
\end{thm}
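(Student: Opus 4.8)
The plan is to recognize $F_n(x,y)$ as a transportation distance between the empirical measures of $x$ and $y$, and then to exploit that $w(x)$ being a singleton forces the empirical measures of $x$ to genuinely converge. Throughout, write $e_n(z)=\frac1n\sum_{i=0}^{n-1}\delta_{T^iz}$ for the $n$-th empirical measure of $z\in X$, and let $W_1$ denote the $L^1$-transportation (Kantorovich--Wasserstein) distance associated with $d$ on the space of Borel probability measures on $X$; recall $w(z)$ is exactly the set of weak-$*$ subsequential limits of $\{e_n(z)\}$, so $w(z)\subset M(X,T)$ is nonempty and compact.

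The first step is the identity $F_n(x,y)=W_1(e_n(x),e_n(y))$. Indeed, a coupling of the two uniform measures $e_n(x)$ and $e_n(y)$ is, up to the factor $1/n$, a doubly stochastic $n\times n$ matrix $P$, and the transport cost $\frac1n\sum_{i,j}P_{ij}\,d(T^ix,T^jy)$ is linear in $P$; hence by the Birkhoff--von Neumann theorem it attains its minimum at an extreme point of the Birkhoff polytope, i.e.\ at a permutation matrix, and this minimum is precisely $\min_{\sigma\in S_n}\frac1n\sum_{i=0}^{n-1}d(T^ix,T^{\sigma(i)}y)=F_n(x,y)$. Since $X$ is compact, $W_1$ is a metric that induces the weak-$*$ topology, so it satisfies the triangle inequality and metrizes convergence of measures.

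Next I would bring in the hypothesis. Because $w(x)=\{\mu\}$ and the space of Borel probability measures is weak-$*$ compact metrizable, the sequence $\{e_n(x)\}$ has $\mu$ as its only limit point, hence $e_n(x)\to\mu$ weak-$*$ and $W_1(e_n(x),\mu)\to 0$. By the triangle inequality for $W_1$ followed by passing to the $\limsup$,
$$F(x,y)=\limsup_{n\to\infty}W_1(e_n(x),e_n(y))\le\limsup_{n\to\infty}W_1(\mu,e_n(y)).$$
Since $\nu\mapsto W_1(\mu,\nu)$ is continuous on a compact space and the set of subsequential weak-$*$ limits of $\{e_n(y)\}$ is exactly $w(y)$, the right-hand side equals $\max_{\nu\in w(y)}W_1(\mu,\nu)$. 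Thus $F(x,y)\le\max_{\nu\in w(y)}W_1(\mu,\nu)$.

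To finish, fix $\ep>0$. The metric on $M(X,T)$ underlying $H$ and the metric $W_1$ both induce the weak-$*$ topology on the compact set $M(X,T)$, hence are uniformly equivalent, so I can pick $\delta>0$ so that measures at $H$-metric distance $<\delta$ are at $W_1$-distance $<\ep$. If $H(w(x),w(y))<\delta$, then since $w(x)=\{\mu\}$ every $\nu\in w(y)$ lies within $\delta$ of $\mu$, hence $W_1(\mu,\nu)<\ep$, so $F(x,y)\le\max_{\nu\in w(y)}W_1(\mu,\nu)\le\ep$; running the argument with $\ep/2$ in place of $\ep$ gives the strict inequality. The step I expect to be the main obstacle is the first one — carefully pinning down the Birkhoff--von Neumann identification $F_n(x,y)=W_1(e_n(x),e_n(y))$ and the fact that $W_1$ metrizes weak-$*$ convergence on a compact metric space; after that, the argument is just soft compactness together with triangle-inequality bookkeeping.
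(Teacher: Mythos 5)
Your proof is correct. Note that the paper itself does not prove this statement --- it is quoted from the preprint \cite{c} --- so there is no internal proof to compare against; your argument stands as a valid self-contained derivation, and it follows the route one would expect (and which, as far as the identification of $F_n$ with a transportation distance between empirical measures goes, is the mechanism underlying the cited source as well). All the key steps check out: the Birkhoff--von Neumann reduction gives $F_n(x,y)=W_1(e_n(x),e_n(y))$ (the only point you gloss over is that when the orbit segment has repeated points the coupling polytope is a transportation polytope with non-uniform margins rather than the Birkhoff polytope itself, but every coupling lifts to a doubly stochastic matrix with the same cost, so the minimum is still attained at a permutation); a sequence in a compact metric space with a unique limit point converges, so $W_1(e_n(x),\mu)\to 0$; the identity $\limsup_n W_1(\mu,e_n(y))=\max_{\nu\in w(y)}W_1(\mu,\nu)$ follows by extracting convergent subsequences; and the uniform equivalence on the compact set $M(X,T)$ of $W_1$ with the metric underlying $H$ converts the Hausdorff hypothesis $H(\{\mu\},w(y))<\delta$ (which is exactly $\sup_{\nu\in w(y)}\rho(\mu,\nu)<\delta$) into the needed bound. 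Running the argument with $\ep/2$ to get the strict inequality is the right finishing touch.
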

We will see that the notion of $\mu$-$F$-equicontinuity is quite ``weak". In fact, we have the following theorem:
\begin{thm}\label{thm:F-eq-weak}
Let $(X,T)$ be a TDS and $\mu\in M(X,T)$, then	$(X, T)$ is $\mu$-$F$-equicontinuous.
\end{thm}
\begin{proof}
	Let $$X_0=\{x\in X:f^*(x):=\lim_{n\to+\infty}\frac{1}{n}\sum_{i=0}^{n-1}f(T^ix)\text{ exists,\ }\forall\ f\in C(X)\}.$$
	By Birkhoff ergodic theorem, we have $\mu(X_0)=1$.
	Let $\{f_n\}_{n=1}^\infty$ be a dense subset of $C(X).$
	
	{\bf Claim:} $\forall\ \epsilon,\tau>0,$ there exist  compact set $K\subset X$ with $\mu(K)>1-\tau$ and $\delta>0$ such that $x,y\in K$ and $d(x,y)<\delta\Rightarrow H(w(x),w(y))<\epsilon.$
	
	Proof of  claim: $\forall\ \epsilon,\tau>0,$
	choose $N\in\N$ such that $\frac{1}{2^N}<\frac{\ep}{2}$. Since $f^*_1,\ldots,f^*_N$ are measurable, there exists $K\subset X$ with $\mu(K)>1-\tau$ such that $f^*_1,\ldots,f^*_N$ are equicontinuous on $K$.
	Since $\mu(X_0)=1$, we can assume $K\subset X_0$.
	Hence there exists $\delta>0$ such that for $x,y\in K$ and $d(x,y)<\delta$, we have $$|f^*_i(x)-f^*_i(y)|<\frac{\ep||f_i||_\infty}{2N}, 1\leq i\leq N.$$
	Since $x,y\in X_0,$ we can assume $w(x)=\{\upsilon\}, w(y)=\{\lambda\}$. Hence \begin{equation*}
	\begin{split}
	H(w(x),w(y))=d(\upsilon,\lambda)&=\sum_{n=1}^\infty\frac{|\int f_nd\upsilon-\int f_nd\lambda|}{2^{n+1}||f_n||_\infty}=\sum_{n=1}^\infty\frac{| f_n^*(x)- f_n^*(y)|}{2^{n+1}||f_n||_\infty}\\
	&=\sum_{n=1}^N\frac{| f_n^*(x)- f_n^*(y)|}{2^{n+1}||f_n||_\infty}+\sum_{n=N+1}^\infty\frac{| f_n^*(x)- f_n^*(y)|}{2^{n+1}||f_n||_\infty}\\
	&<\frac{\ep}{2}+\frac{\ep}{2}=\ep.
	\end{split}
	\end{equation*}
	The proof of claim is finished.
	
	\medskip
	
	Given $\tau > 0$. For any $l\in \N$, by  claim there exist $K_l$ with $\mu(K_l)>1-\frac{\tau}{2^l}$ and $\delta_l > 0$ such that
	$$x, y\in K_l,\ d(x, y)<\delta_l\Rightarrow H(w(x),w(y))< \frac{\tau}{2^l}.$$
	Let $K=\bigcap_{l=1}^\infty K_l$. Then $\mu(K)>1-\tau$. Without loss of generality, we assume $K$ is compact and $K\subset X_0$. We next show that $F$ is equicontinuous on $K$.
	
	Let $\ep>0$. For any $x\in K$, since $x\in X_0$, by
	Theorem \ref{thm2}, there exists $\eta_x>0$ such that for $y\in X,$ $$H(w(x),w(y))<\eta_x\Rightarrow F(x,y)<\frac{\ep}{2}.$$ Choose $\l_x\in\N$ with $
	\frac{\tau}{2^{l_x}}<\eta_x$. Now we have $$K\subset \bigcup_{x\in K}B(x,\delta_{l_x}).$$ Since $K$ is compact, let $\delta>0$ be the Lebesgue number  of the cover above. For any $x,y\in K$ with $d(x,y)<\delta,$ there must be some $z\in K$ such that $x,y \in B(z,\delta_{l_z})$. Since  $x,z\in K\subset K_{l_z}$ and $d(x,z)<\delta_{l_z}$,  we have $$H(w(x),w(z))< \frac{\tau}{2^{l_z}}<\eta_z.$$
	It follows that $F(x,z)<\frac{\ep}{2}.$ Similarly $F(y,z)<\frac{\ep}{2}$, hence $F(x,y)<\ep.$
\end{proof}

\begin{rem}
		If we similarly define $sp_F(\mu,n,\ep)$, then the $\mu$-$\{F_n\}$-equicontinuity implies that $\{sp_F(\mu,n,\ep)\}_{n=1}^\infty$ is bounded for any $\ep>0.$
\end{rem}
The following characterization of ergodicity was proved in \cite[Theorem 5.4]{zhen}, here we give an alterative and elementary proof.
\begin{thm}\label{ergo}
	Let $(X,T)$ be a TDS and $\mu\in M(X,T)$. Then $\mu$ is ergodic if and only if $\mu\times\mu(\{(x,y)\in X\times X: F(x,y)=0\})=1.$
\end{thm}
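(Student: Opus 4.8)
The plan is to handle the two implications separately. The direction ``$\mu$ ergodic $\Rightarrow$ $\mu\times\mu(\{F=0\})=1$'' will come from the generic-point description of distribution measures together with Theorem \ref{thm2}; for the converse I will prove the contrapositive using an elementary lower bound on $F_n$ by Birkhoff averages.

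Assume $\mu$ is ergodic. By Birkhoff's ergodic theorem the set $X_1=\{x\in X: w(x)=\{\mu\}\}$ (equivalently, $\frac1n\sum_{i=0}^{n-1}\delta_{T^ix}\to\mu$ weak$^*$) has $\mu(X_1)=1$. Fix $x\in X_1$. Since $w(x)$ is a single point, Theorem \ref{thm2} yields, for each $\ep>0$, a $\delta>0$ with $H(w(x),w(y))<\delta\Rightarrow F(x,y)<\ep$. Every $y\in X_1$ satisfies $w(y)=\{\mu\}=w(x)$, hence $H(w(x),w(y))=0<\delta$ and so $F(x,y)<\ep$; letting $\ep\to0$ gives $F(x,y)=0$. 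Thus $X_1\times X_1\subseteq\{(x,y):F(x,y)=0\}$ and $\mu\times\mu(X_1\times X_1)=1$. (Alternatively, one can observe directly that $F_n(x,y)$ equals the Wasserstein-$1$ distance between the two empirical measures, both of which converge to $\mu$.)

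For the converse, suppose $\mu$ is not ergodic. The key estimate is: for any $L$-Lipschitz $f:X\to\R$, any $x,y\in X$, any $n\in\N$ and any $\sigma\in S_n$,
$$\sum_{k=0}^{n-1}d(T^kx,T^{\sigma(k)}y)\ \ge\ \frac1L\sum_{k=0}^{n-1}\bigl(f(T^kx)-f(T^{\sigma(k)}y)\bigr)\ =\ \frac1L\Bigl(\sum_{k=0}^{n-1}f(T^kx)-\sum_{k=0}^{n-1}f(T^ky)\Bigr),$$
where the last equality holds because $\sigma$ merely permutes the summation indices, so the bound does not depend on $\sigma$; applying it also to $-f$ and taking $\inf_{\sigma\in S_n}$ gives
$$F_n(x,y)\ \ge\ \frac1L\Bigl|\frac1n\sum_{k=0}^{n-1}f(T^kx)-\frac1n\sum_{k=0}^{n-1}f(T^ky)\Bigr|.$$
Since $\mu$ is not ergodic and Lipschitz functions are $L^1(\mu)$-dense, there is an $L$-Lipschitz $f$ whose Birkhoff average $f^*(x)=\lim_n\frac1n\sum_{k=0}^{n-1}f(T^kx)$ is not $\mu$-a.e. constant (otherwise every invariant $L^1$ function would be a.e. constant, making $\mu$ ergodic). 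Choose $a<b$ with $\mu(A)>0$ and $\mu(B)>0$, where $A=\{f^*\le a\}$ and $B=\{f^*\ge b\}$ (both defined off a $\mu$-null set). For $x\in A$, $y\in B$ at which the Birkhoff averages converge, the displayed estimate gives $F(x,y)=\limsup_n F_n(x,y)\ge\frac1L|f^*(x)-f^*(y)|\ge\frac{b-a}{L}>0$. Hence $\{F=0\}$ misses a product set of $\mu\times\mu$-measure at least $\mu(A)\mu(B)$, so $\mu\times\mu(\{F=0\})\le1-\mu(A)\mu(B)<1$.

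Everything outside the last estimate is routine (weak$^*$ convergence of empirical measures, the $L^1$ ergodic theorem, density of Lipschitz functions, and the characterization of ergodicity by triviality of the invariant $\sigma$-algebra). The one point that needs care, and which I expect to be the crux, is that the infimum over all of $S_n$ in the definition of $F_n$ might a priori destroy any lower bound; the resolution is that the Lipschitz comparison is summed termwise, producing a quantity that does not see $\sigma$ at all, so the infimum is harmless.
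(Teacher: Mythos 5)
Your proof is correct. The forward implication is essentially identical to the paper's: both use Birkhoff's theorem to get a full-measure set of generic points and then invoke Theorem \ref{thm2} to conclude $F(x,y)=0$ on its square. The reverse implication is where you genuinely diverge. The paper argues by contradiction: it takes an invariant set $A$ with $0<\mu(A)<1$, extracts compact sets $K_1\subset A$ and $K_2\subset A^c$ at positive distance $\delta$, and uses measure-preservation on $A$ and $A^c$ to produce a pair $(x,y)$ whose orbit segments sit mostly in $K_1$ and $K_2$ respectively while satisfying $F_N(x,y)<(1-2\delta_0)\delta$; any permutation is then forced to pair at least $(1-2\delta_0)N$ indices across the gap, each contributing at least $\delta$, which is the contradiction. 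You instead lower-bound $F_n(x,y)$ by $\frac1L\bigl|\frac1n\sum_k f(T^kx)-\frac1n\sum_k f(T^ky)\bigr|$ for any $L$-Lipschitz $f$, exploiting that the Birkhoff sum is blind to the permutation, and then pick a Lipschitz observable whose time average is non-constant. These are two faces of the same underlying fact (your bound is the Kantorovich--Rubinstein lower estimate for the $W_1$-distance between empirical measures, while the paper's is the special case of an indicator-like observable separated by a gap), but your version is cleaner: it avoids the choice of $N$ and the intersection of three positive-measure sets, gives a quantitative lower bound $F\ge (b-a)/L$ on a product set of positive measure rather than a bare contradiction, and isolates the one delicate point (why the infimum over $S_n$ is harmless) explicitly. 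The only steps you leave implicit --- $L^1$-density of Lipschitz functions, $L^1$-continuity of conditional expectation onto the invariant $\sigma$-algebra, and the existence of thresholds $a<b$ with $\mu(f^*\le a)>0$ and $\mu(f^*\ge b)>0$ --- are all standard and correct as stated.
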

\begin{proof}
	$\Rightarrow$:	If $\mu$ is ergodic,  then by Birkhoff ergodic theorem, there exists $X_0$ with $\mu(X_0)=1$ such that $ w(x)=\{\mu\}, \forall x\in X_0.$ For $(x,y)\in X_0\times X_0,$  by Theorem \ref{thm2}, we have  $F(x,y)=0.$
	
	$\Leftarrow$:
	Assume there exists $A\subset X$ such that $T^{-1}A=A$ and $0<\mu(A)<1$. Choose $0<\delta_0<\frac{1}{2}$ and two compact sets $K_1\subset A, K_2\subset A^c$ with $$\mu(K_1)>\mu(A)(1-\delta_0^2),\ \mu(K_2)>\mu(A^c)(1-\delta_0^2).$$ Let $\delta=d(K_1,K_2)$. Since $$\mu\times\mu(\{(x,y)\in X\times X: F(x,y)=0\})=1,$$ we can find $N\in\N$ such  that $$(\mu\times\mu)(D)>1-\mu(A)\mu(A^c)(1-\delta_0)^2,$$ where $$D=\{(x,y)\in X\times X: F_N(x,y)<(1-2\delta_0)\delta\}.$$
Since $(A,\frac{\mu}{\mu(A)},T)$ is a MPS, it is easy to see that
	$$\mu(\{x\in A:\frac{1}{N}\sum_{i=0}^{N-1}1_{K_1}(T^ix)>1-\delta_0\})>\mu(A)(1-\delta_0).$$

	Similarly $$\mu(\{x\in A^c:\frac{1}{N}\sum_{i=0}^{N-1}1_{K_2}(T^ix)>1-\delta_0\})>\mu(A^c)(1-\delta_0).$$
		Hence the intersection of  $D$ and  the product of the above two sets is not empty.

Choose $x,y$ such that $F_N(x,y)<(1-2\delta_0)\delta$ and  $$|\{0\leq i\leq N-1: T^ix \in K_1\}|>(1-\delta_0)N,$$ $$|\{0\leq i\leq N-1: T^iy \in K_2\}|>(1-\delta_0)N.$$
 By the definition of $F_N,$ there exists a permutation $\sigma$ of $\{0,1,\ldots,N-1\}$ such that $$\frac{1}{N}\sum_{i=0}^{N-1}d(T^ix,T^{\sigma(i)}y)<(1-2\delta_0)\delta.$$
But we  have
\begin{equation*}
\sum_{i=0}^{N-1}d(T^ix,T^{\sigma(i)}y)\geq\sum_{i:T^ix\in K_1,T^{\sigma(i)}y\in K_2}d(T^ix,T^{\sigma(i)}y)\geq (1-2\delta_0)N\delta.
\end{equation*}
A contradiction!
	\end{proof}
\begin{rem}
	We can see the proof of $``\Leftarrow"$ part does not rely on any tool in ergodic theory.
\end{rem}

\section{$\mu$-$FK$-equicontinuity}\label{sect:FK-equi}
  In \cite{loose}, the authors showed that the notion of $\mu$-$FK$-equicontinuity is closely related to the concept of loosely Kronecker. In this section we  study this notion and strengthen some results in \cite{loose}.

\medskip

First we  give the definition of $\mu$-$FK$-equicontinuity.
\begin{de}
	Let $(X, T)$ be a TDS and $\mu\in M(X,T)$. We
	say that $(X, T )$ is {\it $\mu$-$FK$-equicontinuous} if for every $\tau>0$ there exists a compact set
	$M\subset X$ with $\mu(M)>1-\tau$, such that for every $\epsilon>0$, there exists $\delta>0$ such that
	for every $x,y\in M$, $d(x, y) < \delta\Rightarrow d_{FK}(x, y) <\epsilon$.
\end{de}
\begin{rem}
	\begin{enumerate}
		\item $\mu$-$FK$-equicontinuity is equivalent to   $\mu$-$\{FK_n\}$-equicontinuity (please see Theorem \ref{thmA}).
		\item It is easy to see that $\mu$-$\{FK_n\}$-equicontinuity implies that $\{sp_{FK}(\mu,n,\ep)\}_{n=1}^\infty$ is bounded for every $\ep>0$, which in turn implies zero entropy by Remark \ref{rem:brin-katok:3}\eqref{rem:brin-katok:3:3}.
	\end{enumerate}

\end{rem}

\medskip

 In \cite{loose} the authors showed that when $\mu$ is ergodic, $\mu$-$\{FK_n\}$-equicontinuity is equivalent to the following stronger statement: There exists some $M$ with full measure such that for all $x,y\in M,$ $d_{FK}(x, y) = 0.$  In this case, the author proved that $\mu$-$\{FK_n\}$-equicontinuity is equivalent to loosely Kronecker. In their proof,  the following Katok's Criterion (please see \cite[Theorem 4]{katok} or \cite[section 6]{orw} for details) played a key role:
\begin{thm}\cite[Katok's Criterion]{katok}\label{ka}
 A MPS $(X,\mu,T)$ is loosely Kronecker if and only if
 the following statement	 holds:

 $(*)$ for every finite partition $\mathcal{P}$ and $\ep>0$, there exists $N$ such that for every $n>N$, there exists a word $w\in\mathcal{P}^n$ such that
	$\mu(\{w^\prime\in \mathcal{P}^n: \bar{f}_n(w,w^\prime) <\ep\}) \geq 1-\ep.$
\end{thm}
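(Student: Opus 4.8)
\emph{Proof proposal.} The plan is to establish the two implications separately, exploiting that ``loosely Kronecker'' means \emph{Kakutani equivalent to a Kronecker system}, together with the fact that the edit metric $\bar f$ plays the role for Kakutani equivalence that the Hamming metric $\bar d$ plays for isomorphism — in particular, it is essentially insensitive to the time changes (tower surgeries) that implement a Kakutani equivalence.

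For the direction ``loosely Kronecker $\Rightarrow(*)$'', I would first verify $(*)$ for an irrational rotation $(\T,m,R_\alpha)$ directly. Partition $\T$ into finitely many half-open arcs; for $m$-a.e.\ $t$ the two $R_\alpha$-orbit segments of length $n$ starting at $t$ and at $R_\alpha^j t$ agree, as words in $\mathcal{P}^n$, outside the $O(j)=O(1)$ positions at which a shifted arc-partition places a discontinuity inside the observation window, so $\bar f_n$ between any two such names is $o(1)$ uniformly; by equidistribution, once $n$ is large all but an $\ep$-fraction (in measure) of names lie within $\bar f_n$-distance $\ep$ of one fixed reference name, which is $(*)$ for the rotation. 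I would then transport $(*)$ through the Kakutani equivalence: given a finite partition $\mathcal{P}$ of $X$, realize the equivalence as a measurable time change built on Rokhlin towers, carry $\mathcal{P}$ over to a partition $\mathcal{Q}$ of $\T$, apply $(*)$ to $(\T,R_\alpha,\mathcal{Q})$, and carry the resulting $\bar f$-concentration back to $(X,T,\mathcal{P})$. The crucial point making this work is that a time change alters $\bar f_n$-distances by only a controlled amount — precisely the ``$\bar f$ is orbit-invariant'' phenomenon recorded in \cite[Fact 17]{FK} — so the concentration survives at the cost of replacing $\ep$ by a comparable quantity; the level-by-level bookkeeping on the towers is routine but lengthy.

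For the converse ``$(*)\Rightarrow$ loosely Kronecker'' I would argue in two stages. \emph{Stage 1: $(*)$ forces $h_\mu(T)=0$.} By Stirling's formula a $\bar f_n$-ball of radius $\ep$ in $\mathcal{P}^n$ contains at most $(C_n^{[\ep n]})^2|\mathcal{P}|^{\ep n}=e^{n g(\ep)}$ words with $g(\ep)\to 0$ as $\ep\to 0$ (exactly the counting estimate already used in the proofs of Theorems \ref{thm3.8} and \ref{thm3.11}); if one such ball has $\mu$-measure $\ge 1-\ep$, then splitting $\mathcal{P}^n$ along that ball and bounding the entropy of each piece by the logarithm of its number of atoms gives $H_\mu(\mathcal{P}^n)\le n g(\ep)+\ep n\log|\mathcal{P}|+\log 2$, whence $h_\mu(T,\mathcal{P})\le g(\ep)+\ep\log|\mathcal{P}|$, and letting $\ep\to 0$ over a generating sequence yields $h_\mu(T)=0$. \emph{Stage 2: produce the Kakutani equivalence with a rotation.} Condition $(*)$ says the process is $\bar f$-concentrated on a single word at every resolution; reading off these words along a refining generating sequence of partitions produces a sequence of ``almost periodic'' symbolic pictures of $(X,\mu,T)$, and one feeds these, together with the zero-entropy conclusion of Stage 1, into the Ornstein--Rudolph--Weiss copying (marriage) lemmas to build, level by level on Rokhlin towers, a time change conjugating $(X,\mu,T)$ to an inverse limit of periodic systems, i.e.\ to an ergodic Kronecker system. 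Equivalently, one recognizes $(*)$ as a restatement in the zero-entropy case of Feldman's \emph{finitely fixed} property and invokes the ORW equivalence ``finitely fixed $\Leftrightarrow$ loosely Bernoulli'', which at zero entropy is loosely Kronecker.

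The main obstacle is Stage 2 of the converse: extracting an actual Kakutani equivalence with a rotation from the purely combinatorial $\bar f$-concentration $(*)$ requires the full copying/marriage-lemma machinery of ORW and a delicate synchronization of time changes across tower levels, with no real shortcut available — this is precisely the content of Katok's original argument in \cite{katok}, which is why in the body of the paper we merely quote the statement rather than reprove it.
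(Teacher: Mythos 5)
The first thing to note is that the paper does not prove this statement at all: Theorem \ref{ka} is quoted verbatim from Katok's paper, with the reader directed to \cite[Theorem 4]{katok} and \cite[Section 6]{orw} for the argument. So there is no in-paper proof to compare against, and the only question is whether your sketch would stand on its own. It would not, for two concrete reasons. First, in the forward direction you transport $(*)$ across a Kakutani equivalence by asserting that ``a time change alters $\bar f_n$-distances by only a controlled amount'' and you cite \cite[Fact 17]{FK} for this. That fact only says $d_{FK}(x,Tx)=0$, i.e.\ invariance of the pseudometric along a single orbit; it says nothing about the behaviour of $\bar f_n$ under the insertion/deletion of symbols produced by inducing on a set or building a tower with a non-constant return time. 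The genuine statement you need --- that $\bar f$-closeness of processes is preserved under Kakutani equivalence, with error controlled by the integral of the roof function --- is a substantive lemma of the Ornstein--Rudolph--Weiss theory, not a consequence of the cited fact, and the ``routine but lengthy bookkeeping'' you defer is exactly where the content lies.

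Second, and more seriously, Stage~2 of your converse is not an argument but a pointer: you reduce $(*)\Rightarrow$ loosely Kronecker to ``feed the $\bar f$-concentrated names into the ORW copying lemmas,'' which is precisely the theorem being proved. (Stage~1, the entropy-zero deduction via the counting bound $(C_n^{[\ep n]})^2|\mathcal{P}|^{\ep n}$ on $\bar f_n$-balls, is fine and matches estimates the paper uses elsewhere, but zero entropy is only a necessary condition.) You are also slightly loose in identifying $(*)$ with ``finitely fixed at zero entropy'': finitely fixed is a statement about all processes whose finite distributions approximate the given one, whereas $(*)$ is an internal concentration condition on a single process, and their equivalence at zero entropy is itself a theorem of the ORW/Katok theory. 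In short, your outline correctly names the circle of ideas, but as the paper treats this result as an external citation, the right disposition is to cite \cite{katok} or \cite{orw} rather than to present this sketch as a proof.
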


Note that there is no topology in the above statement. However, when there is a metric,  by generalizing the idea in \cite{loose}, we  give a topological version of the  Katok's Criterion.
\begin{thm}\label{thm:f-bar}
	Let $(X,T)$ be a TDS and $\mu\in M(X,T).$ Then the following statements are equivalent:
	\begin{enumerate}
		\item  $\mu\times\mu(\{(x,y)\in X\times X:d_{FK}(x, y) = 0\})=1$.
		\item There exists $M\subset X$ with $\mu(M)=1$ such that $d_{FK}(x, y) = 0,\forall x,y\in M.$
		\item 	the condition $(*)$ in Theorem \ref{ka} holds.
	\end{enumerate}
Moreover, any one of (1), (2), (3) implies the ergodicity of $\mu.$
\end{thm}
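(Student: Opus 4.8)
The plan is to prove the cycle of implications $(3)\Rightarrow(1)\Rightarrow(2)\Rightarrow(3)$, together with the final assertion that any of these implies ergodicity. The equivalence $(1)\Leftrightarrow(2)$ is the easy part: $(2)\Rightarrow(1)$ is immediate since $M\times M$ has full product measure, while for $(1)\Rightarrow(2)$ I would argue as in the standard Fubini-type arguments: if $d_{FK}(x,y)=0$ for $(\mu\times\mu)$-a.e.\ $(x,y)$, then for $\mu$-a.e.\ $x$ the set $M_x=\{y:d_{FK}(x,y)=0\}$ has full $\mu$-measure; pick one such $x_0$, set $M=M_{x_0}$, and use the triangle inequality for the pseudometric $d_{FK}$ to conclude $d_{FK}(x,y)\le d_{FK}(x,x_0)+d_{FK}(x_0,y)=0$ for all $x,y\in M$. (One should intersect $M$ with the conull set where $d_{FK}(x,x_0)=0$, which is legitimate precisely because $x_0$ was chosen generically.)

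For $(3)\Rightarrow(1)$, the idea is to take the family $\{\mathcal P_i\}$ of finite Borel partitions with $\operatorname{diam}(\mathcal P_i)\to 0$ as in the proof of Theorem \ref{thm3.9}, and to translate the combinatorial closeness $\bar f_n(w,w')<\ep$ of $\mathcal P_i$-names into metric closeness $d_{FK_n}(x,y)<\ep'$. Concretely, if $\operatorname{diam}(\mathcal P_i)<\delta$ and $\bar f_n(\mathcal P_i^n(x),\mathcal P_i^n(y))<\ep$, then there is an order-preserving match $\pi$ of the name-coordinates with $|\pi|>(1-\ep)n$ such that matched coordinates lie in the same partition element, hence $d(T^ix,T^{\pi(i)}y)<\delta$ for $i\in D(\pi)$; this shows $\bar f_{n,\delta}(x,y)<\ep$ and so $d_{FK_n}(x,y)\le\max(\delta,\ep)$. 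Applying $(*)$ to $\mathcal P_i$ with a small $\ep$ produces, for large $n$, a word $w$ with $\mu(\{w':\bar f_n(w,w')<\ep\})\ge 1-\ep$; the preimage set of $x$'s whose name is such a $w'$ has $\mu$-measure $\ge 1-\ep$, and any two such points are $d_{FK_n}$-close up to $\max(\delta,\ep)$. Intersecting over a sequence of scales and using a Borel--Cantelli / diagonal argument along $i\to\infty$ (so that $\delta=\operatorname{diam}(\mathcal P_i)\to0$ and $\ep\to0$) yields a full-measure product set on which $\limsup_n d_{FK_n}=d_{FK}=0$.

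For $(2)\Rightarrow(3)$, I would fix a finite partition $\mathcal P$ and $\ep>0$, and approximate $\mathcal P$ by a partition $\xi$ whose nontrivial atoms are closed and separated by some $b>0$, with a small "garbage" atom of measure $<\kappa^2$, exactly as in the proof of Theorem \ref{thm3.11}; it suffices to establish $(*)$ for such $\xi$. On the full-measure set $M$ all pairs satisfy $d_{FK}(x,y)=0$, hence $d_{FK_n}(x,y)<\epsilon'$ for all large $n$; combining this with the Birkhoff-type control of the frequency of visits to the good part $K=\bigcup B_i$ (so that a $d_{FK_n}$-match of $x$ and $y$ forces their $\xi$-names to $\bar f_n$-agree on a $(1-O(\kappa+\epsilon'))$-fraction of coordinates, as in the Claim in Theorem \ref{thm3.11}), one gets that the $\xi$-names of a large-measure set of points are mutually $\bar f_n$-close; picking any one of them as $w$ gives $(*)$. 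The bookkeeping here — making the "for all large $n$" uniform over a positive-measure set of $x$, which requires an Egorov step to pass from pointwise $d_{FK_n}(x,y)\to0$ to uniform smallness on a set of measure close to $1$ — is where I expect the main technical obstacle; the pseudometric's orbit-invariance and the triangle inequality are the tools that make it go through. Finally, ergodicity follows from Theorem \ref{ergo} (or Theorem \ref{thm2}): since $\tilde d_{FK}\le d_{FK}$ and $F$ is equivalent to $\tilde d_{FK}$ by Theorem \ref{thm:FK=F}, condition (1) forces $F(x,y)=0$ for $(\mu\times\mu)$-a.e.\ $(x,y)$, and Theorem \ref{ergo} then gives that $\mu$ is ergodic.
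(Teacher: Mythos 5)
Your treatment of $(1)\Leftrightarrow(2)$, of $(2)\Rightarrow(3)$, and of the ergodicity assertion for (1) and (2) matches the paper's argument: Fubini plus the triangle inequality for the pseudometric; the closed-atom approximation $Q_i\subset P_i$ with mutual separation $b$, frequency-of-visits control, and the transfer of an $(n,\delta)$-match into $\bar{f}_n$-closeness of names (your worry about "establishing $(*)$ only for $\xi$" is harmless, since agreement of the matched coordinates inside $K=\bigcup Q_i$ already forces agreement of the $\mathcal{P}$-names there); and Theorem \ref{ergo} combined with Theorem \ref{thm:FK=F} and $\tilde{d}_{FK}\le d_{FK}$. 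The genuine gap is in your direction $(3)\Rightarrow(1)$. Condition $(*)$ gives you, for each large $n$, a word $w_n$ and a good set $G_n$ with $\mu(G_n)\ge 1-\ep$ on which the $\bar{f}_n$-names are pairwise $2\ep$-close, hence $d_{FK_n}\le\max(\delta,2\ep)$ on $G_n\times G_n$. But $d_{FK}(x,y)=\inf\{\delta:\limsup_n\bar{f}_{n,\delta}(x,y)<\delta\}$ requires the pair $(x,y)$ to be controlled for \emph{all} sufficiently large $n$, i.e.\ you need $x,y\in\bigcap_{n\ge N}G_n$. The sets $G_n$ vary with $n$, their complements are only bounded by $\ep$ (not summable), so Borel--Cantelli does not apply and $\bigcap_{n\ge N}G_n$ may well be null; reverse Fatou only places a set of measure $\ge 1-\ep$ inside infinitely many $G_n$, which does not control a $\limsup$.

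The paper closes this gap differently: it fixes a \emph{single} $N$ and a single word $w\in\mathcal{P}^N$, lets $K$ be the positive-measure set of points whose $N$-name is $\bar{f}_N$-close to $w$, and then --- using that (3) already implies ergodicity by \cite[Corollary 9.2]{katok}, which the paper takes as an external input rather than deriving it from (1) --- applies the ergodic theorem to select return times $l_0<l_1<\cdots$ of $x$ to $K$ and $j_0<j_1<\cdots$ of $y$ to $K$, each spaced at least $N$ apart, and concatenates the order-preserving $N$-block matches between $[l_k,l_k+N-1]$ and $[j_k,j_k+N-1]$ into an $(n,\cdot)$-match of density close to $(1-\ep)^2$ for \emph{every} large $n$, giving $d_{FK}\le 2\ep$ on all of $K\times K$. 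The $T$-invariance of $B_{d_{FK}}(x_0,\ep)$ together with ergodicity then upgrades $K$ to a full-measure set, and intersecting over $\ep=1/n$ yields (2). Note that if you repair your argument along these lines you must obtain ergodicity of $\mu$ under (3) independently (as the paper does), since your plan deduces it only after $(3)\Rightarrow(1)$ is proved, which would be circular.
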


\begin{proof} (1) or (2) implies the ergodicity of $\mu$ is from Theorem \ref{ergo}, Theorem \ref{thm:FK=F} and the fact that $\tilde{d}_{FK}\leq {d}_{FK}$. (3) implies the ergodicity of $\mu$ is \cite[Corollary 9.2]{katok}.
	
		$(1)\Rightarrow(2)$: By Fubini's Theorem, there is some $x_0$ such that $\mu(B_{d_{FK}}(x_0,0))=1$, where $$B_{d_{FK}}(x_0,0)=\{y\in X:d_{FK}(x_0, y) = 0\}.$$ $B_{d_{FK}}(x_0,0)$ is the set we need.
		
	$(2)\Rightarrow(1)$: Obvious.

	$(2)\Rightarrow(3)$: See \cite[Theorem 4.5]{loose}.
	We give a proof for completeness.
	Let $\mathcal{P}$ be a finite partition  and $\ep>0.$ Assume $\mathcal{P}=\{P_1,\ldots,P_k\}$. We can find closed set $Q_i\subset P_i, i=1,\ldots,k$ such that $\mu(\bigcup_{i=1}^kQ_i)>1-\frac{\ep}{2}$. Set $K=\bigcup_{i=1}^kQ_i.$ Then $\mu(K)>1-\frac{\ep}{2}.$ Let $b=\min_{i\ne j}d(Q_i,Q_j).$ Let $\delta=\min\{b, \frac{\ep}{3}\}.$ Choose $x_0\in M.$ For $N\in\N$, let $$A_N=\{x\in M: d_{FK_n}(x,x_0)<\frac{\delta}{2}, \forall \ n>N\} .$$ It is easy to see that $A_N$'s are increasing and $\bigcup_{N=1}^\infty A_N=M.$ We can choose some  $N_1$ with $\mu(A_{N_1})>1-\frac{\ep}{2}.$  Since $$\frac{1}{n}\sum_{i=0}^{n-1}1_K(T^ix)\stackrel{a.e.}\to \mu(K)>1-\frac{\ep}{3},$$
	we can find $N_2$ such that
	$\mu(B)>1-\frac{\ep}{2},$
	where $$B=\{x\in X: \frac{1}{n}\sum_{i=0}^{n-1}1_K(T^ix)>1-\frac{\ep}{3},\ \forall\ n>N_2\}.$$
	Hence $\mu(B\cap A_{N_1})>1-\ep.$ Let $N=\max\{N_1,N_2\}.$ For every $n>N, x ,y \in B\cap A_{N_1},$ we have
	$$\frac{1}{n}\sum_{i=0}^{n-1}1_K(T^ix)>1-\frac{\ep}{3},\ \frac{1}{n}\sum_{i=0}^{n-1}1_K(T^ix)>1-\frac{\ep}{3}.$$
	Let $D_x=\{0\leq i\leq n-1:T^ix\in K \}.$
	We have $|D_x|>(1-\frac{\ep}{3})n$. Similarly define $D_y$ and $|D_y|>(1-\frac{\ep}{3})n$. Since $d_{FK_n}(x,y)<\delta$, we can find $D\subset \{0,1,\dots,n-1\}$ and an order-preserving bijection $\pi: D\to \pi(D)$ such that
	$$|D|>(1-\delta)n>(1-\frac{\ep}{3})n$$ and $$d(T^ix, T^{\pi(i)}y)<\delta<b,\ \forall i\in D.$$
	Let $$D^\prime=\pi^{-1}(\pi(D\cap D_x)\cap D_y).$$
	Then $|D^\prime|>(1-\ep)n.$ For $i\in D^\prime,$ we have $T^ix\in K, T^{\pi(i)}y\in K$ and  $d(T^ix, T^{\pi(i)}y)<b.$ Hence $\mathcal{P}(T^ix)=\mathcal{P}(T^{\pi(i)}y).$ That is $\bar{f}_n(\mathcal{P}^n(x),\mathcal{P}^n(y))<\ep.$

	$(3)\Rightarrow(2):$
	{\bf Claim:} For any $\ep>0$, there exists $\mu(K)>0$ such that $x,y\in K\Rightarrow d_{FK}(x,y)<\ep.$
	
	Proof of claim:	 Let $\ep>0.$ Let $\mathcal{P}$ be a finite partition of $X$ with $\diam(\mathcal{P})<\ep.$ Choose $N$ such that there exists a word $w\in\mathcal{P}^N$ such that
	$$\mu(\{w^\prime\in \mathcal{P}^N: \bar{f}_N(w,w^\prime) <\frac{\ep}{2}\}) > 1-\ep.$$ Let $$K=\{w^\prime\in \mathcal{P}^N: \bar{f}_N(w,w^\prime) <\frac{\ep}{2}\}.$$ Then $$x,y\in K\Rightarrow  \bar{f}_N(\mathcal{P}^N(x),\mathcal{P}^N(y)) <\ep.$$
	Let $x,y\in K.$ Since $\mu$ is ergodic, we can assume
	$$\frac{|\{1\leq i\leq n:T^ix\in K\}|}{n}\to \mu(K),$$
	$$\frac{|\{1\leq i\leq n:T^iy\in K\}|}{n}\to \mu(K).$$
	Hence there exists $N_1$ such that for any $n>N_1,$
	 $$\frac{|\{1\leq i\leq n:T^ix\in K^c\}|}{n}<\ep,$$
	 $$\frac{|\{1\leq i\leq n:T^iy\in K^c\}|}{n}<\ep.$$
	Let $$l_0=0,\ l_1=\min\{l>N-1:T^lx\in K\},\ldots,$$
	$$ l_k=\min\{l>l_{k-1}+N-1:T^lx\in K\}\ldots,$$
	and
	$$j_0=0,\ j_1=\min\{j>N-1:T^jy\in K\},\ldots,$$
	$$ j_k=\min\{j>j_{k-1}+N-1:T^jy\in K\}\ldots.$$
	Since $T^{l_k}x,T^{j_k}y\in K,$ we have $$\bar{f}_N(\mathcal{P}^N(T^{l_k}x),\mathcal{P}^N(T^{j_k}y)) <\ep.$$
	 Hence there exists $D_k\subset [l_k,l_k+N-1]$ and an order-preserving bijection $$\pi_k:D_k\to [j_k,j_k+N-1]$$ such that $$|D_k|>N(1-\ep),
	  \ d(T^ix,T^{\pi_k(i)}y)<\ep,\ i\in D_k.$$
	Let $n>\max\{N,N_1\}$ and $k_0=\min\{k: l_k+N-1>n\}.$ Since $$[l_i+N,l_{i+1}-1]\subset \{i\in\N:T^ix\in K^c\}$$ and
	$$|\{1\leq i\leq n:T^ix\in K^c\}|<n\ep,$$
	it is easy to see that $$(k_0+1)N>n(1-\ep).$$ Similarly, let $$k_0^\prime=\min\{k: j_k+N-1>n\}.$$ Then $$(k_0^\prime+1)N>n(1-\ep).$$
	Let $$D=\bigcup_{k=0}^{[\frac{n(1-\ep)}{N}]-1}D_k$$
	and define $\pi$ on $D$ as  $\pi(i)=\pi_k(i)$ for $i\in D_k.$
Then	$$|D|=\sum |D_k|>[\frac{n(1-\ep)}{N}]N(1-\ep).$$ Hence $d_{FK_n}(x,y)<2\ep+\frac{N}{n}$ and $$d_{FK}(x,y)=\limsup_{n\to+\infty} d_{FK_n}(x,y)\leq2\ep.$$
	The proof of claim is finished.
	
	\medskip
	
Let $\ep>0$. We find $M_\ep$ as follows. Let $K$ be the set in the claim for $\ep$.	Choose $x_0\in K$.
	It is easy to see that $K\subset B_{d_{FK}}(x_0,\ep).$ Hence $\mu(B_{d_{FK}}(x_0,\ep))>0.$ Note that $B_{d_{FK}}(x_0,\ep)$ is $T$-invariant. By the ergodicity of $\mu$ we have $\mu(B_{d_{FK}}(x_0,\ep))=1.$ Let $M_\ep=B_{d_{FK}}(x_0,\ep).$ Then $\bigcap_{n=1}^\infty M_{\frac{1}{n}}$ is the set we need.
\end{proof}
\begin{rem}\label{rem:fk-f-bar}
\begin{enumerate}
  \item Theorem \ref{thm:f-bar} indicates that for the sufficiency proofs of Proposition 3.14 and Theorem 4.5 of \cite{loose} the ergodicity condition is no need to emphasize separately. We also note that the equivalence of $(2)$ and $(3)$ in Theorem \ref{thm:f-bar} is in fact  equivalent to the profound Theorem 4.5 of  \cite{loose}. Nonetheless,  our proof of $(2)\Rightarrow(3)$  seems simpler and  $(3)\Rightarrow(2)$ is new.

  \item Combining  \cite[Theorem 3.1]{jin} and Theorem \ref{ergo}, it is easy to show that for a TDS $(X,T)$ and $\mu\in M(X,T)$, the following statements are equivalent:
	\begin{enumerate}
		\item $\mu\times\mu(\{(x,y)\in X\times X: \bar{d}(x,y)=0\})=1.$
		\item $(X,\mu,T)$ is a trivial system.
\end{enumerate}	
	\end{enumerate}
\end{rem}

\medskip

In general $\mu$-$\{FK_n\}$-equicontinuity doesn't imply the ergodicity of $\mu$. Recently, Huang et al. proved the followong theorem:
\begin{thm}\label{12}\cite{xu}
	Let  $(X,T)$ be a TDS and $\mu\in M(X,T)$. Then $(X,T)$ is $\mu$-mean-equicontinuous if and only if $(X,\mu,T)$ has discrete spectrum.
\end{thm} This useful fact  helps us study $\mu$-$\{FK_n\}$-equicontinuity in non-ergodic case.

First we need some preparations.

\medskip

Let $(X,\mu,T)$ be a MPS and $A\subset X$ with $\mu(A)>0$. We can define the {\it induced system} $(A,\mu_A,T_A)$ as $\mu_A(B)=\frac{\mu(B)}{\mu(A)}$ for $ B\subset A$ and $T_A(x)=T^{r_A}(x)$ for $x\in A,$ where $$r_A(x)=\min\{n\geq 1:T^nx\in A\}.$$
Please see \cite{kaku,orw,book} for details.

We have the following theorem.
\begin{thm}\label{thm:discrete-specturm}
	Let $(X,T)$ be a TDS and $\mu\in M(X,T)$. If for any $\ep>0$ there exists a subset $A\subset X$ with $\mu(A)>1-\ep$ such that $(A,\mu_A,T_A)$ has discrete spectrum, then $(X,\mu,T)$ is $\mu$-$FK$-equicontinuous.
\end{thm}
\begin{proof}
	Let $\ep,\tau>0$ and $\tau_0=\frac{1}{2}\min\{\ep,\tau\}$. By assumption there is some $A\subset X$ with $\mu(A)>1-\tau_0^2$ such that $(A,\mu_A,T_A)$ has discrete spectrum. Hence by Theorem \ref{12}, $(A,\mu_A,T_A)$ is $\mu_A$-mean-equicontinuous. According to Theorem \ref{thmA}, $(A,\mu_A,T_A)$ is $\mu_A$-$\{\bar{d}_n\}$-equicontinuous, hence we can choose  $K\subset A$ with $\mu_A(K)>\frac{1}{\mu(A)}(1-\tau_0)$ such that
	$K$ is $T_A$-$\{\bar{d}_n\}$-equicontinuous. Hence $\mu(K)>1-\tau_0$. It is easy to see that
	$$\mu(\{x\in X:\lim_{n\to\infty}\frac{1}{n}\sum_{i=1}^{n} \chi_{A}(T^ix)>1-\tau_0\})>1-\tau_0.$$
	Let $$K_0=K\cap \{x\in X:\lim_{n\to\infty}\frac{1}{n}\sum_{i=1}^{n} \chi_{A}(T^ix)>1-\tau_0\}.$$ Then $\mu(K_0)>1-\tau$.
	
	Since $K$ is $T_A$-$\{\bar{d}_n\}$-equicontinuous, there exists $\delta>0$ such that
	$$x,y\in K,d(x,y)<\delta\Rightarrow |\{1\leq i\leq n:d(T_A^ix,T_A^iy)>\ep\}|<n\ep, \forall
	n\in\N.$$
	Let $x,y\in K_0$ and $d(x,y)<\delta.$ Since
	$$\lim_{n\to+\infty}\frac{1}{n}\sum_{i=1}^{n} \chi_{A}(T^ix)>1-\ep
	,\lim_{n\to+\infty}\frac{1}{n}\sum_{i=1}^{n} \chi_{A}(T^iy)>1-\ep,$$
	there exists $N$ such that for any $n>N$, we have
	$$\frac{1}{n}\sum_{i=1}^{n} \chi_{A}(T^ix)>1-\ep
	,\frac{1}{n}\sum_{i=1}^{n} \chi_{A}(T^iy)>1-\ep.$$
	For $n>N$, let $$\{1\leq i\leq n:T^ix\in A\}=\{i_1<\ldots<i_{k_x}\},$$
	$$\{1\leq i\leq n:T^iy\in A\}=\{j_1<\ldots<j_{k_y}\},$$ where $k_x,k_y>n(1-\ep).$
	Since $x,y\in K,d(x,y)<\delta,$ we have
	\begin{equation*}
	\begin{split}
	&|\{1\leq t\leq [n(1-\ep)]:d(T_A^tx,T_A^ty)\leq\ep\}|\\
	=&|\{1\leq t\leq [n(1-\ep)]:d(T^{i_t}x,T^{j_t}y)\leq\ep\}|
	\geq[n(1-\ep)](1-\ep).
	\end{split}
	\end{equation*}
	Let $$\{1\leq t\leq [n(1-\ep)]:d(T^{i_t}x,T^{j_t}y)\leq\ep\}=\{t_1<\ldots<t_m\}, $$
	$m\geq[n(1-\ep)](1-\ep)$	and $$D=\{i_{t_1}<\ldots<i_{t_m}\}, \pi(i_{t_k})=j_{t_k}.$$
	It is easy to see $d_{FK_n}(x,y)<2\ep+\frac{1}{n}.$ Hence $d_{FK}(x,y)\leq2\ep.$
	
	Now we prove that: for any $\ep,\tau>0$, there exist $K_0\subset X$ with $\mu(K_0)>1-\tau$, and $\delta>0$ such that $$x,y\in K_0,d(x,y)<\delta\Rightarrow d_{FK}(x,y)\leq2\ep.$$
	It follows from Proposition \ref{pm2} that $(X,\mu,T)$ is $\mu$-$FK$-equicontinuous.
\end{proof}

\begin{cor}\label{cor:discrete-specturm}
Let $(X,T)$ be a TDS with $\mu\in M(X,T)$. If $(X,\mu,T)$ is loosely Kronecker then it is $\mu$-$FK$-equicontinuous.
\end{cor}
\begin{proof}
   	If $(X,\mu,T)$ is loosely Kronecker, then the condition in Theorem \ref{thm:discrete-specturm} is satisfied. See \cite[Corollary 5.1]{katok} or \cite[Corollary 5.6]{orw} for details. Then applying Theorem \ref{thm:discrete-specturm} the corollary follows.
\end{proof}

\begin{rem}
Corollary \ref{cor:discrete-specturm} provides another new proof of the sufficiency of \cite[Theorem 4.5]{loose}.
\end{rem}

\appendix
\section{}
 In this section we use the method in \cite{xu}
and generalize some results.

 Let $(X,T)$ be a TDS and $\mu\in M(X,T)$. Let $E$ denotes the metric $d_{FK}$, $F$ or $\bar{d}$.

 We define {\em $\mu$-$\{E_n\}$-equicontinuous} as : for every $\tau>0$ there exists
 $K\subset X$ with $\mu(K)>1-\tau$, such that for every $\epsilon>0$, there exists $\delta>0$ such that
$$x,y\in K, d(x, y) < \delta\Rightarrow E_n(x, y) <\epsilon,\ \forall n\in\N.$$

 We define {\em $\mu$-$E$-equicontinuous} as : for every $\tau>0$ there exists
 $K\subset X$ with $\mu(K)>1-\tau$, such that for every $\epsilon>0$, there exists $\delta>0$ such that
 $$x,y\in K, d(x, y) < \delta\Rightarrow E(x, y) <\epsilon.$$
\begin{prop}\label{pm}
Let $(X,T)$ be a TDS and $\mu\in M(X,T)$. 	Then the following statements are equivalent:
	\begin{enumerate}
		\item $(X,T)$ is $\mu$-$\{E_n\}$-equicontinuous.
		\item For any $\tau>0$ and $\epsilon>0$, there exist subset $K$ of $X$ with $\mu(K)>1-\tau$ and $\delta>0$ such that
		\begin{equation*}
		x,y\in K,d(x,y)<\delta\Rightarrow E_n(x,y)<\epsilon,\ \forall n\in\N.
		\end{equation*}	
	\end{enumerate}
\end{prop}
\begin{proof}
	(1)$\Rightarrow$(2): It is obvious from definition.
	
	(2)$\Rightarrow$(1): Given $\tau>0$. For any $l\in\N$, by (2) there are  $K_l$  with $\mu(K_l)>1-\frac{\tau}{2^l}$ and $\delta_l>0$ such that for any $x,y\in K_l$ with $d(x,y)<\delta_l$, we have
	$E_n(x,y)<\frac{1}{l}$
	for all $n\in\N$. Let $K=\bigcap\limits_{l=1}^\infty K_l$, then $\mu(K)>1-\tau$.  It is easy to see that $K$ is the set we need.
\end{proof}
Similarly we can prove:
\begin{prop}\label{pm2}
	The following statements are equivalent:
	\begin{enumerate}
		\item $(X,T)$ is $\mu$-$E$-equicontinuous.
		\item For any $\tau>0$ and $\epsilon>0$, there exist a subset $K$ of $X$ with $\mu(K)>1-\tau$ and $\delta>0$ such that
		\begin{equation*}
		x,y\in K,d(x,y)<\delta\Rightarrow E(x,y)<\epsilon.
		\end{equation*}	
	\end{enumerate}
\end{prop}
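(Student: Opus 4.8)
The plan is to mirror the proof of Proposition~\ref{pm}, since statements (1) and (2) differ only in whether the exceptional set $K$ is allowed to depend on $\epsilon$. The implication (1)$\Rightarrow$(2) is immediate from the definitions: given $\tau>0$, produce the set $K$ witnessing $\mu$-$E$-equicontinuity, and for the prescribed $\epsilon>0$ take the corresponding $\delta$; this $K$ and this $\delta$ are exactly what (2) asks for.

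For (2)$\Rightarrow$(1), the idea is a dyadic diagonal argument. Fix $\tau>0$. For each $l\in\N$ apply (2) with $\tau/2^l$ in place of $\tau$ and $1/l$ in place of $\epsilon$, obtaining $K_l\subset X$ with $\mu(K_l)>1-\tau/2^l$ and $\delta_l>0$ such that $x,y\in K_l$ and $d(x,y)<\delta_l$ imply $E(x,y)<1/l$. Put $K=\bigcap_{l\geq 1}K_l$; then $\mu(X\setminus K)\leq\sum_{l\geq 1}\mu(X\setminus K_l)<\sum_{l\geq 1}\tau/2^l=\tau$, so $\mu(K)>1-\tau$. Now let $\epsilon>0$ be arbitrary, choose $l$ with $1/l<\epsilon$, and set $\delta=\delta_l$. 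If $x,y\in K$ and $d(x,y)<\delta$, then in particular $x,y\in K_l$, whence $E(x,y)<1/l<\epsilon$. Thus the single set $K$ works for every $\epsilon$, which is precisely $\mu$-$E$-equicontinuity.

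There is essentially no obstacle: the only point to watch is the interchange of the $\epsilon$- and $K$-quantifiers, which the splitting $\tau=\sum_{l\geq1}2^{-l}\tau$ resolves, together with the trivial monotonicity remark that $E(x,y)<1/l$ forces $E(x,y)<\epsilon$ once $1/l<\epsilon$. If in a particular setting one also wants $K$ to be compact (as in the definitions of $\mu$-$FK$- and $\mu$-$F$-equicontinuity), one first replaces each $K_l$ by a compact subset of measure still exceeding $1-\tau/2^l$ using inner regularity of $\mu$, and then intersects; a countable intersection of compact sets is compact, and the measure estimate is unchanged.
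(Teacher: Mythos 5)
Your proof is correct and follows exactly the same route as the paper, which proves Proposition~\ref{pm2} ``similarly'' to Proposition~\ref{pm}: the forward implication is immediate, and the reverse uses the same dyadic splitting $\tau=\sum_{l\ge1}2^{-l}\tau$ with $\epsilon=1/l$ and intersection of the sets $K_l$. The extra remark on recovering compactness of $K$ via inner regularity is a sensible (if implicit in the paper) touch.
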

\begin{thm}\label{thmA}
	$(X,T)$ is $\mu$-$E$-equicontinuous if and only if $(X,T)$ is $\mu$-$\{E_n\}$-equicontinuous.
\end{thm}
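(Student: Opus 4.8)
The plan is first to invoke Propositions \ref{pm} and \ref{pm2} to replace both properties by their ``for every $\tau$ and every $\epsilon$'' reformulations, and then to prove the two implications in turn; one direction is immediate and the other carries all the weight. Throughout, write $E$ for any one of $d_{FK}$, $F$, $\bar d$ and $E_n$ for the corresponding $d_{FK_n}$, $F_n$, $\bar d_n$; we shall use only that $E(x,y)=\limsup_{n\to\infty}E_n(x,y)$ in each of the three cases, that the functions $E_n$ are uniformly bounded on $X\times X$, and that $\mu$ is $T$-invariant.

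That $\mu$-$\{E_n\}$-equicontinuity implies $\mu$-$E$-equicontinuity is trivial: given $\tau,\epsilon>0$, the compact set $K$ with $\mu(K)>1-\tau$ and the radius $\delta>0$ produced by $\mu$-$\{E_n\}$-equicontinuity for the data $\tau$ and $\epsilon/2$ already witness $\mu$-$E$-equicontinuity, since $x,y\in K$ and $d(x,y)<\delta$ force $E_n(x,y)<\epsilon/2$ for all $n$, whence $E(x,y)=\limsup_nE_n(x,y)\le\epsilon/2<\epsilon$.

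For the converse, fix $\tau,\epsilon>0$. From $\mu$-$E$-equicontinuity, applied with a suitably small auxiliary $\epsilon'\le\epsilon$ (to be chosen in terms of $\operatorname{diam}(X)$), we obtain a compact set $M$ with $\mu(M)>1-\tau/2$ and a radius $\delta_0>0$ such that $x,y\in M$, $d(x,y)<\delta_0$ imply $E(x,y)<\epsilon'$. The task is to upgrade this bound on the $\limsup$ to a bound on every $E_n$, uniformly on a slightly smaller set. A bounded range of indices is no trouble: each $T^i$ is uniformly continuous on the compact space $X$, so for any fixed threshold $N_0$ there is $\delta_1>0$ with $d(x,y)<\delta_1\Rightarrow E_n(x,y)<\epsilon$ for all $n\le N_0$. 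For $n>N_0$ one uses the averaging/concatenation inequalities relating $E_n(x,y)$ to the behaviour of the orbits $\{T^ix\}$ and $\{T^iy\}$: for $\bar d$ one has $\bar d_n(x,y)\le\rho+\operatorname{diam}(X)\cdot\frac1n\big|\{0\le i<n:d(T^ix,T^iy)\ge\rho\}\big|$ for every $\rho>0$; for $d_{FK}$ and $F$ one splits $\{0,\dots,n-1\}$ into consecutive blocks of length $N_0$ and concatenates the block matches (respectively permutations), so that $E_n(x,y)$ is small as soon as $E_{N_0}$ is small on a large proportion of the blocks. In either form, what must be controlled is the proportion of indices $i<n$ at which $(T^ix,T^iy)$ behaves badly. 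Choosing $\epsilon'$ small enough, $E$-equicontinuity forces this proportion to have $\limsup$ below the required level for each individual close pair from $M$; to make the corresponding threshold uniform over a large set of pairs, one applies the Birkhoff ergodic theorem to $\mathbf{1}_M$ (and, after fixing $N_0$, to $E_{N_0}$ seen along the $T\times T$-orbit) together with Egorov's theorem. This produces a compact set $K\subset M$ with $\mu(K)>1-\tau$ and a radius $\delta\le\min\{\delta_0,\delta_1\}$ such that $x,y\in K$ and $d(x,y)<\delta$ give $E_n(x,y)<\epsilon$ for every $n$; by Propositions \ref{pm} and \ref{pm2}, $(X,T)$ is then $\mu$-$\{E_n\}$-equicontinuous. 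This is the mechanism of \cite{xu}.

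The crux, and the only step that is not bookkeeping, is the uniformization just described. The hypothesis of $\mu$-$E$-equicontinuity only provides, for each close pair $x,y$ from $M$, a threshold $N(x,y)$ beyond which $E_n(x,y)<\epsilon$, whereas $\mu$-$\{E_n\}$-equicontinuity demands a single $N$, and in fact a single $\delta$, valid for all $n$ on a set whose complement has measure $<\tau$. Converting the pointwise $\limsup$-control into this uniform control is precisely where $T$-invariance of $\mu$ is used, through the Birkhoff/Egorov argument of \cite{xu}; granting that step, the remaining verifications are routine manipulations of the three averaging inequalities and of the uniform continuity of each $T^i$.
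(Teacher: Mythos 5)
Your easy direction and the reduction via Propositions \ref{pm} and \ref{pm2} are fine, but the hard direction has a genuine gap at precisely the step you identify as the crux. You propose to control $E_n(x,y)$ for large $n$ by cutting $\{0,\dots,n-1\}$ into blocks of length $N_0$ and concatenating block matches, then to control the proportion of bad blocks by Birkhoff plus Egorov. Two things go wrong. First, to make the block starting at $jN_0$ good you would need $E_{N_0}(T^{jN_0}x,T^{jN_0}y)$ small, and the only available tool, $\mu$-$E$-equicontinuity, applies to pairs that are $d$-close and lie in $M$; nothing guarantees that $T^{jN_0}x$ and $T^{jN_0}y$ remain $d$-close (in general they separate), so the hypothesis cannot be applied to the shifted pairs. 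Second, a Birkhoff/Egorov uniformization yields statements about $\mu\times\mu$-almost every pair, or uniform statements on a set of large $\mu\times\mu$-measure, whereas $\mu$-$\{E_n\}$-equicontinuity quantifies over \emph{all} pairs $x,y\in K$ with $d(x,y)<\delta$; there is no way to force every such pair into the Egorov set. Moreover, for $E=d_{FK}$ or $F$ the quantity $E_n(x,y)$ is not a Birkhoff average along the $T\times T$-orbit, because of the optimization over matches and permutations, so the ergodic theorem does not apply in the form you invoke. Your closing claim that $T$-invariance of $\mu$ is the engine of the theorem is also off: the actual proof uses no dynamics beyond measurability of the $E_n$ and uniform continuity of finitely many iterates of $T$.

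The missing idea is a much softer uniformization through reference points. Take $K_0$ with $\mu(K_0)>1-\tau/2$ on which $E$ is equicontinuous with modulus $\delta_0$ at level $\epsilon/2$, cover $K_0$ by finitely many pieces $X_1,\dots,X_m$ of diameter $<\delta_0$ with chosen points $x_j\in X_j$, and observe that $X_j=\bigcup_{N}A_N(x_j)$ where $A_N(x_j)=\{y\in X_j:E_n(x_j,y)<\epsilon/2\ \forall n>N\}$ is increasing in $N$; continuity of measure then produces a single threshold $N_0$ with $\mu(\bigcup_j A_{N_0}(x_j))>1-\tau/2$. Shrinking to pairwise disjoint compact sets $K_j\subset A_{N_0}(x_j)$ and taking $\delta$ smaller than the mutual distances $d(K_i,K_j)$ and than a $\delta_1$ that handles the finitely many indices $n\le N_0$ by uniform continuity of $T,\dots,T^{N_0-1}$, one gets: any $d$-close pair $x,y$ in $K=\bigcup_jK_j$ lies in a single $K_{j_0}$, and the triangle inequality for the pseudometric $E_n$ through $x_{j_0}$ gives $E_n(x,y)\le E_n(x_{j_0},x)+E_n(x_{j_0},y)<\epsilon$ for all $n>N_0$. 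The uniform-in-$n$ threshold thus comes from exhausting each $X_j$ by the sets $A_N(x_j)$ and from the triangle inequality, not from any ergodic theorem; without this replacement your argument does not close.
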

\begin{proof}
	Assume 	$(X,T)$ is $\mu$-$E$-equicontinuous.
	Fix $\tau>0$ and $\epsilon>0$. There exists a  set $K_0$ with $\mu(K_0)>1-\dfrac{\tau}{2}$ such that $K_0$ is $E$-equicontinuous. Then there exists $\delta_0>0$ such that for all $x,y\in K_0$ with $d(x,y)<\delta_0$, we have
	\begin{equation*}
	\limsup_{n\rightarrow\infty}E_n(x,y)<\frac{\epsilon}{2},
	\end{equation*}
	By the compactness of $X$, there exist $X_1,\ldots,X_m$ with $\textrm{diam}(X_i)<\delta_0,1\leq i\leq m,\ m\in\N$ such that $K_0=\bigcup\limits_{i=1}^mX_i$. Without loss of generality, assume $X_i\neq \emptyset$ and choose $x_i\in X_i,1\leq i\leq m.$ For $1\leq j\leq m$ and $N\in\N$, let
	\begin{equation*}
	A_N(x_j)=\{y\in X_j:{E_n}(x_j,y)<\frac{\epsilon}{2},\forall n>N\}.
	\end{equation*}
	It is easy to see that for each $1\leq j\leq m$, $\{A_N(x_j)\}_{N=1}^\infty$ is increasing and
	\begin{equation*}
	X_j=\bigcup\limits_{N=1}^\infty A_N(x_j).
	\end{equation*}
	Choose $N_0\in\N$ such that
	\begin{equation*}
	\mu(\bigcup\limits_{j=1}^mA_{N_0}(x_j))>1-\dfrac{\tau}{2}.
	\end{equation*}		
	
	By the regularity of $\mu$, we can find  pairwise disjoint compact sets
	$K_j\subset A_{N_0}(x_j),1\leq j\leq m$
	such that
	\begin{equation*}
	\mu(\bigcup_{j=1}^mK_j)>1-\tau.
	\end{equation*}
	Let $K=\bigcup\limits_{j=1}^mK_j$, then  $\mu(K)>1-\tau$.
	Choose $\delta_1>0$ such that
	$$d(x,y)<\delta_1\Rightarrow d(T^ix,T^iy)<\frac{\ep}{2}, 1\leq i\leq {N_0}.$$
	Choose $\delta>0$ with
	\begin{equation*}
	\delta<\min\{\delta_1,d(K_i,K_j),1\leq i\neq j\leq m\}.
	\end{equation*}
	For any $x,y\in K$ and $d(x,y)<\delta$, if $1\leq n\leq {N_0}$, it is easy to see that $E_n(x,y)<\ep$; if $n>{N_0}$, since $d(x,y)<\delta$, there exists $1\leq j_0\leq m$ such that $x,y\in K_{j_0}$. Then
	\begin{equation*}
	E_n(x_{j_0},x)<\frac{\epsilon}{2},\
	E_n(x_{j_0},y)<\frac{\epsilon}{2}.
	\end{equation*}
	Hence
	$E_n(x,y)<\ep.$
	By Proposition \ref{pm}, $(X,T)$ is $\mu$-$\{E_n\}$-equicontinuous.
\end{proof}

\medskip

\begin{proof}[Proof of Proposition \ref{prop:measure-complexity}]
	It is sufficient to show that if the measure FK-complexity of $(X,d,T,\mu)$  is
	weaker than $U(n)$, then  the measure FK-complexity of $(Y,d',S,\nu)$  is weaker than $U(n)$.
	
	Assume that the measure FK-complexity of $(X,d,T,\mu)$  is
	weaker than $U(n)$.
	Since $(X,\mathcal{B}(X),T,\mu)$ is measurably isomorphic to $(Y,\mathcal{B}(Y),S,\nu)$, there are
	$X'\in \mathcal{B}(X), Y' \in \mathcal{B}(Y)$ with $\mu(X') = 1$, $\nu(Y') = 1$, $TX'\subseteq X'$ and $SY'\subseteq Y'$,
	and an invertible measure-preserving map $\phi : X' \rightarrow Y'$ with $\phi \circ T(x) =
	S \circ \phi (x)$ for every $x\in X'$.

	Take $\epsilon>0$. By Lusin theorem there is a compact subset $A$ of $X'$ such that
	$\mu(A)>1-{\ep}/{2}$ and  $\phi|_A$ is a continuous function.
	Choose $\delta\in (0,\epsilon/3)$ such that
	\begin{equation*}\label{hz-1}\tag{*}
	d'(\phi(x),\phi(y))<\ep\ \text{for any}\ x,y\in A\ \text{with}\ d(x,y)<\delta.
	\end{equation*}
	
	{\bf Claim}: $sp_{FK}(d,\mu,n, \frac{\delta}{2})\ge sp_{FK}(d',\nu,n,\epsilon)$ for all $n\in \mathbb{N}$.
	
	Proof of claim:
	Fix $n\in \mathbb{N}$. Let
	$$E_n=\{x\in A:\frac{|\{0\le i \le n-1: T^ix\in A\}|}{n}\le 1-\frac{\ep}{3}\}.$$
	It is easy to see that $\mu(E_n)< \frac{\ep}{3}$. Put $A'=:A\setminus E_n$. We have $\mu(A')>1-\frac{\ep}{2}-\frac{\epsilon}{3}=1-\frac{5\ep}{6}$.
	
	For $x,y\in A'$, if $d_{FK_n}(x,y)<\delta$, then there are $D\subset \{0,1,\ldots,n-1\}$ and an order-presvering bijection $\pi\colon D\to \pi(D)\subset \{0,1,\ldots,n-1\}$ such that $|D|=|\pi(D)|>(1-\delta)n$ and
	$$
	d(T^ix, T^{\pi(i)}y)<\delta, \ \forall i\in D.
	$$
	Let $D_x=\{0\le i \le n-1: T^ix\in A\}$, $D_y=\{0\le i \le n-1: T^iy\in A\}$ and
	$$
	D'=\pi^{-1}(\pi(D\cap D_x)\cap D_y).
	$$
	Since $x,y\in A'$, then $|D'|>(1-\frac{2\ep}{3}-\delta)n>(1-\ep)n$. For $i\in D'$, we have $T^ix\in A, T^{\pi(i)}y\in A$ and $d(T^ix, T^{\pi(i)}y)<\delta$. By \eqref{hz-1} we have
	$$d'(S^i\phi(x),S^{\pi(i)}\phi(y))=d'(\phi(T^ix),\phi(T^{\pi(i)}y))<\ep$$
	for all $i\in D'$. This implies that
	$d'_{FK_{n}}(\phi(x),\phi(y))<\ep$.
	
	Pick $x_1,x_2,\cdots,x_{m}\in X$ such that $m:=m(n)=sp_{FK}(d,\mu,n,\frac{\delta}{2})$ and
	$$
	\mu\big(\bigcup_{i=1}^{m} B_{d_{FK_n}}(x_i,\frac{\delta}{2})\big)>1-\frac{\delta}{2}.
	$$
	Let $I_n=\{ r\in [1,m]: B_{d_{FK_n}}(x_r,\frac{\delta}{2})\cap A' \neq \emptyset\}$.
	For $r\in I_n$, we choose $y_r^n\in B_{d_{FK_n}}(x_r,\frac{\delta}{2})\cap A'$. Then
	$$\bigcup_{r\in I_n} \big( B_{d_{FK_n}}(y_r^n,\delta)\cap A'\big) \supseteq \bigcup_{r\in I_n} \big( B_{d_{FK_n}}(x_r,\frac{\delta}{2})\cap A' \big)=\big(\bigcup_{i=1}^m B_{d_{FK_n}}(x_i,\frac{\delta}{2})\big)\cap A'.$$
	Thus
	$$\mu(\bigcup_{r\in I_n} \big(B_{d_{FK_n}}(y_r^n,\delta)\cap A'\big))\ge \mu(\big(\bigcup_{i=1}^m B_{d_{FK_n}}(x_i,\frac{\delta}{2})\big)\cap A')
	>1-\frac{\delta}{2}-\frac{5\ep}{6}> 1-\epsilon.$$
	Since $d'_{FK_n}(\phi(x),\phi(y))<\ep$ for $x,y \in A'$ with $d_{FK_n}(x,y) < \delta$,
	one has $$\phi(B_{d_{FK_n}}(y_r^n,\delta)\cap A')\subseteq B_{d'_{FK_n}}(\phi(y_r^n),\epsilon)$$ for $r\in I_n$.
	Thus
	\begin{align*}
	\nu(\bigcup_{r\in I_n} B_{d'_{FK_n}}(\phi(y_r^n),\epsilon))&\ge
	\nu(\bigcup_{r\in I_n} \phi(B_{d_{FK_n}}(y_r^n,\delta)\cap A'))\\
	&=\nu (\phi \big(\bigcup_{r\in I_n}B_{d_{FK_n}}(y_r^n,\delta)\cap A'\big ))
	=\mu(\bigcup_{r\in I_n}B_{d_{FK_n}}(y_r^n,\delta)\cap A')>1-\epsilon.
	\end{align*}
	Hence $sp_{FK}(d',\nu,n, \ep)\le |I_n|\le m(n)=sp_{FK}(d,\mu,n, \frac{\delta}{2})$. The proof of claim is finished.
	
	\medskip
	Now since the measure FK-complexity of $(X,d,T,\mu)$  is weaker than $U(n)$ in the sense that
	$\liminf_{n\rightarrow +\infty} \frac{sp_{FK}(d,\mu,n, \frac{\delta}{2})}{U(n)}=0$, then we have $\liminf_{n\rightarrow +\infty} \frac{sp_{FK}(d',\nu,n, \ep)}{U(n)}=0$.
	This finishes the proof.
\end{proof}	

\begin{rem}
	By Proposition \ref{prop:measure-complexity},
	the measure FK-complexity of $(X,d,T,\mu)$ is weaker than $U(n)$ if and only if the measure FK-complexity of
	$(X,d',T,\mu)$ is also weaker than $U(n)$ for any compatible metric $d'$ on $X$.
	Thus we can simply say that the measure FK-complexity of $(X,T,\mu)$ is weaker than $U(n)$.
\end{rem}

\end{document}